\numberwithin{equation}{section}
\newtheorem{theorem}{Theorem}[section]
\newtheorem{lemma}[theorem]{Lemma}
\newtheorem{remark}[theorem]{Remark}
\newtheorem{corollary}[theorem]{Corollary}
\newtheorem{proposition}[theorem]{Proposition}
\theoremstyle{definition}
\newtheorem{definition}[theorem]{Definition}
\newcommand{\R}{\mathbb{R}}
\newcommand{\CK}{\mathcal{C}_K}
\newcommand{\bla}{{\bld \lambda}}
\newcommand{\p}{{\partial}}
\newcommand{\nab}{\nabla}
\newcommand{\mct}{\mathcal{T}_h}
\newcommand{\dive}{{\ensuremath\mathop{\mathrm{div}\,}}}
\newcommand{\Div}{{\rm div}\,}
\newcommand{\pol}{\EuScript{P}}
\newcommand{\bpol}{\boldsymbol{\pol}}
\newcommand{\bld}[1]{\boldsymbol{#1}}
\newcommand{\bI}{\bld{I}}
\newcommand{\bb}{\bld{b}}
\newcommand{\bv}{\bld{v}}
\newcommand{\bw}{\bld{w}}
\newcommand{\bs}{\bld{s}}
\newcommand{\bp}{\bld{p}}
\newcommand{\bn}{\bld{n}}
\newcommand{\bV}{\bld{V}}
\newcommand{\btheta}{\bld{\theta}}
\newcommand{\bH}{\bld{H}}
\newcommand{\bbeta}{{\bm \beta}}
\newcommand{\bpsi}{\bm \psi}
\newcommand{\bc}{\bm c}
\newcommand{\bbR}{\mathbb{R}}
\newcommand{\MV}{\bV^\text{MR}}
\newcommand\drawtriangle[1][]{

\coordinate (C3) at (5,0);
\coordinate (C1) at (0,0);
\coordinate (C2) at (2.5,4);

\coordinate (E1) at ($(C1)!0.5!(C3)$);
\coordinate (E2) at ($(C1)!0.5!(C2)$);
\coordinate (E3) at ($(C2)!0.5!(C3)$);

\coordinate (B) at (7.5/3,4/3)  {};

\fill[lightgray!30] (C1)--(C2)--(C3)--cycle; 
\draw[-](C1)--(C2)--(C3)--(C1);

}
\newcommand{\drawtetAlt}{

\pgfmathsetmacro{\factor}{1/sqrt(2)};
\pgfmathsetmacro{\COney}{0.25}; \pgfmathsetmacro{\COnez}{-0.00}; \pgfmathsetmacro{\COnex}{ 1.45*\factor};
\pgfmathsetmacro{\CTwoy}{0}; \pgfmathsetmacro{\CTwoz}{1.25}; \pgfmathsetmacro{\CTwox}{ 0*\factor};
\pgfmathsetmacro{\CThreey}{-1}; \pgfmathsetmacro{\CThreez}{0}; \pgfmathsetmacro{\CThreex}{0*\factor};
\pgfmathsetmacro{\CFoury}{1}; \pgfmathsetmacro{\CFourz}{0}; \pgfmathsetmacro{\CFourx}{ 0*\factor};


\coordinate (C1) at (\COnex,\COney,\COnez);
\coordinate (C2) at (\CTwox,\CTwoy,\CTwoz);
\coordinate (C3) at (\CThreex,\CThreey,\CThreez);
\coordinate (C4) at (\CFourx,\CFoury,\CFourz);

%
\pgfmathsetmacro{\COneTwox}{\COnex-\CTwox};
\pgfmathsetmacro{\COneTwoy}{\COney-\CTwoy};
\pgfmathsetmacro{\COneTwoz}{\COnez-\CTwoz};

\pgfmathsetmacro{\COneThreex}{\COnex-\CThreex};
\pgfmathsetmacro{\COneThreey}{\COney-\CThreey};
\pgfmathsetmacro{\COneThreez}{\COnez-\CThreez};

\pgfmathsetmacro{\COneFourx}{\COnex-\CFourx};
\pgfmathsetmacro{\COneFoury}{\COney-\CFoury};
\pgfmathsetmacro{\COneFourz}{\COnez-\CFourz};

\pgfmathsetmacro{\CTwoThreex}{\CTwox-\CThreex};
\pgfmathsetmacro{\CTwoThreey}{\CTwoy-\CThreey};
\pgfmathsetmacro{\CTwoThreez}{\CTwoz-\CThreez};

\pgfmathsetmacro{\CTwoFourx}{\CTwox-\CFourx};
\pgfmathsetmacro{\CTwoFoury}{\CTwoy-\CFoury};
\pgfmathsetmacro{\CTwoFourz}{\CTwoz-\CFourz};

\pgfmathsetmacro{\CThreeFourx}{\CThreex-\CFourx};
\pgfmathsetmacro{\CThreeFoury}{\CThreey-\CFoury};
\pgfmathsetmacro{\CThreeFourz}{\CThreez-\CFourz};

\coordinate (T1) at ($(C1)!0.33333!(C2)$);  
\coordinate (T2) at ($(C2)!0.666666!(C3)$); 
\coordinate (B1) at ($(T1)!0.5!(T2)$);

\coordinate (T3) at ($(C2)!0.33333!(C3)$);
\coordinate (T4) at ($(C3)!0.666666!(C4)$);
\coordinate (B2) at ($(T3)!0.5!(T4)$);

\coordinate (T5) at ($(C3)!0.33333!(C4)$);
\coordinate (T6) at ($(C4)!0.666666!(C1)$);
\coordinate (B3) at ($(T5)!0.5!(T6)$);

\coordinate (T7) at ($(C4)!0.33333!(C1)$);
\coordinate (T8) at ($(C1)!0.666666!(C2)$);
\coordinate (B4) at ($(T7)!0.5!(T8)$);

\fill[-, fill=gray!30, opacity=.5] (C1)--(C2)--(C3); 
\draw[-, black!100] (C1) --(C2)--(C3)--cycle;
\fill[-, fill=gray!30, opacity=.5] (C1)--(C2)--(C4); 

\draw[-, black!100] (C1) --(C2)--(C4)--cycle;
\draw[dashed,gray!90] (C3)--(C4);

\coordinate (M1) at ($(C1)!0.5!(C2)$);
\coordinate (M2) at ($(C3)!0.5!(C4)$);
\coordinate (B) at  ($(M1)!0.5!(M2)$); 

\tdplotcrossprod(\COneTwox,\COneTwoy,\COneTwoz)(\COneFourx,\COneFoury,\COneFourz);
\pgfmathsetmacro{\s}{0.8/sqrt(\tdplotresx*\tdplotresx + \tdplotresy*\tdplotresy + \tdplotresz*\tdplotresz)}
\coordinate (NE1) at ($(B4)+(\s*\tdplotresx, -0.125,\s*\tdplotresz+0.25)$);
\draw[thick,->,black!100] (B4) -- (NE1);
%
\tdplotcrossprod(\COneTwox,\COneTwoy,\COneTwoz)(\CTwoThreex,\CTwoThreey,\CTwoThreez);
\pgfmathsetmacro{\st}{0.25/sqrt(\tdplotresx*\tdplotresx + \tdplotresy*\tdplotresy + \tdplotresz*\tdplotresz)}
\coordinate (NE2) at ($(B1)+(\st*\tdplotresx, \st*\tdplotresy, \st*\tdplotresz)$);
\draw[thick,->,black!100] (B1) -- (NE2);
%
\tdplotcrossprod(\COneThreex,\COneThreey,\COneThreez)(\CThreeFourx,\CThreeFoury,\CThreeFourz);
\pgfmathsetmacro{\s}{0.25/sqrt(\tdplotresx*\tdplotresx + \tdplotresy*\tdplotresy + \tdplotresz*\tdplotresz)}
\coordinate (NE3) at ($(B3)+(\s*\tdplotresx, \s*\tdplotresy, \s*\tdplotresz)$);
\draw[thick,->,black!100] (B3) -- (NE3);

\tdplotcrossprod(\CTwoThreex,\CTwoThreey,\CTwoThreez)(\CTwoFourx,\CTwoFoury,\CTwoFourz);
\pgfmathsetmacro{\s}{0.25/sqrt(\tdplotresx*\tdplotresx + \tdplotresy*\tdplotresy + \tdplotresz*\tdplotresz)}
\coordinate (NE4) at ($(B2)-(\s*\tdplotresx, \s*\tdplotresy, \s*\tdplotresz)$);
\draw[thick,->,gray!100] (B2) -- (NE4);

}
\newcommand\drawtet[1][]{

\pgfmathsetmacro{\factor}{sqrt(2)};
\pgfmathsetmacro{\alpha}{10}; 
\pgfmathsetmacro{\gamma}{38}; 
\pgfmathsetmacro{\tau}{0}; 

\pgfmathsetmacro{\cosA}{cos(\alpha)};
\pgfmathsetmacro{\sinA}{sin(\alpha)};

\pgfmathsetmacro{\cosG}{cos(\gamma)};
\pgfmathsetmacro{\sinG}{sin(\gamma)};

\pgfmathsetmacro{\cosT}{cos(\tau)};
\pgfmathsetmacro{\sinT}{sin(\tau)};

\coordinate (C1) at (\cosT*\cosA-\sinT*\sinG*\sinA-0.5*\cosT*\sinA*\factor-0.5*\sinT*\sinG*\cosA*\factor,-\sinT*\cosA-\cosT*\sinG*\sinA+0.5*\sinT*\sinA*\factor-0.5*\cosT*\sinG*\cosA*\factor,-\cosG*\sinA-0.5*\cosG*\cosA*\factor);
\coordinate (C2) at (-\cosT*\cosA+\sinT*\sinG*\sinA-0.5*\cosT*\sinA*\factor-0.5*\sinT*\sinG*\cosA*\factor,\sinT*\cosA+\cosT*\sinG*\sinA+0.5*\sinT*\sinA*\factor-0.5*\cosT*\sinG*\cosA*\factor,\cosG*\sinA-0.5*\cosG*\cosA*\factor);
\coordinate (C3) at (\sinT*\cosG+0.5*\cosT*\sinA*\factor+0.5*\sinT*\sinG*\cosA*\factor,\cosT*\cosG-0.5*\sinT*\sinA*\factor+0.5*\cosT*\sinG*\cosA*\factor,-\sinG+0.5*\cosG*\cosA*\factor);
\coordinate (C4) at (-\sinT*\cosG+0.5*\cosT*\sinA*\factor+0.5*\sinT*\sinG*\cosA*\factor,-\cosT*\cosG-0.5*-\sinT*\sinA*\factor+0.5*\cosT*\sinG*\cosA*\factor,\sinG+0.5*\cosG*\cosA*\factor);

\coordinate (T1) at ($(C1)!0.33333!(C2)$);  
\coordinate (T2) at ($(C2)!0.666666!(C3)$); 
\coordinate (B1) at ($(T1)!0.5!(T2)$);

\coordinate (T3) at ($(C2)!0.33333!(C3)$);
\coordinate (T4) at ($(C3)!0.666666!(C4)$);
\coordinate (B2) at ($(T3)!0.5!(T4)$);

\coordinate (T5) at ($(C3)!0.33333!(C4)$);
\coordinate (T6) at ($(C4)!0.666666!(C1)$);
\coordinate (B3) at ($(T5)!0.5!(T6)$);

\coordinate (T7) at ($(C4)!0.33333!(C1)$);
\coordinate (T8) at ($(C1)!0.666666!(C2)$);
\coordinate (B4) at ($(T7)!0.5!(T8)$);

\coordinate (T11) at ($(C1)!0.16666667!(C2)$);  
\coordinate (T12) at ($(C2)!0.83333333!(C3)$);  

\draw[-, fill=gray!30, opacity=.5] (C1) --(C3)--(C4)--cycle; 
\draw[-, black!100] (C1) --(C3)--(C4)--cycle;
\draw[-, fill=gray!30, opacity=.5] (C2) --(C3)--(C4)--cycle; 
\draw[-, black!100] (C2) --(C3)--(C4)--cycle;

\coordinate (M1) at ($(C1)!0.5!(C2)$);
\coordinate (M2) at ($(C3)!0.5!(C4)$);
\coordinate (B) at  ($(M1)!0.5!(M2)$); 

\draw[dashed,gray!90] (C1)--(C2);
}
\newcommand\drawnode[1]{
	\node[inner sep = 0pt, minimum size=3.5pt,fill = black!100,circle] (N1) at (#1) {};
}
\newcommand\drawhiddennode[1]{
	\node[inner sep = 0pt, minimum size=3.5pt,fill = black!50,circle] (N1) at (#1) {};
}
\newcommand\drawnormals[1]{
\draw[->,thick](2.5,0.)--(2.5,-0.4);
\draw[->,thick](1.25,2)--(0.9108006784,2.211999576);
\draw[->,thick](3.75,2)--(4.089199322,2.211999576);
}
\newenvironment{myproof}[1][\proofname]{%
  \proof[   Proof of #1]%
}{\endproof}
\begin{document}


\title{Inf-sup stable finite elements on barycentric refinements producing divergence--free approximations in arbitrary dimensions}

\author{Johnny Guzm\'an
\thanks{Division of Applied Mathematics, Brown University (johnny\_guzman@brown.edu)} 
\and Michael Neilan
\thanks{Department of Mathematics, University of Pittsburgh (neilan@pitt.edu)}}

\maketitle

\begin{abstract}
We construct
several stable finite element 
pairs for the Stokes problem 
on barycentric refinements
in arbitrary dimensions.
A key feature of the spaces
is that the divergence maps the discrete velocity
space onto the the discrete pressure space;
thus, when applied to models of incompressible flows,
the pairs yield divergence-free 
velocity approximations.
The key result is a local inf-sup stability
that holds for any dimension and for any
polynomial degree.  With this result,
we construct global divergence-free and stable pairs
in arbitrary dimension and for any polynomial degree.

\end{abstract}

\section{Introduction}

In the papers \cite{ArnoldQin92,Zhang04} it was shown that $\bpol^c_{k}-\pol_{k-1}$ is an inf-sup stable
and divergence-free pair 
on barycentric refine meshes in two and three dimensions if the polynomial size  $k$ is 
sufficiently large.
The strategy in the analysis, as shown by Zhang \cite{Zhang04}, is Stenberg's
macro-element technique \cite{Stenberg84}, where the crucial step is a local inf-sup stability
estimate on each macro tetrahedra/triangle.
Then, Bernardi-Raugel \cite{BernardiRaugel} finite elements are implicitly used to control  piecewise constants 
to prove  global inf-sup stability. 
 The use of the Bernardi-Raugel finite elements  is the reason one needs a restriction on $k$
 {to ensure global inf-sup stability}: For dimension $d=2$, $k \ge 2$ and for 
 dimension $d=3$, $k \ge 3$. 

One of our contributions in this paper is to extend the results in \cite{ArnoldQin92,Zhang04} to arbitrary space dimension $d\ge 2$.  
The key step, as in \cite{Zhang04}, is to prove a local inf-sup stability result.  By definition, a barycentric refinement takes a given mesh 
(which we call the macro mesh) and adds the barycenter of each simplex of the macro mesh to the set of vertices.  
We slightly generalize this construction by showing that
one can use any arbitrary point in the interior of each simplex (not just the barycenter), 
as long as the resulting mesh is shape regular.

We then derive several applications of the local inf-sup  stability
result. First, we with the help of the Bernardi-Raugel element, we show that  $\bpol^c_{k}-\pol_{k-1}$ is inf-sup on 
the refined mesh for $k \ge d$ (as was shown in \cite{ArnoldQin92,Zhang04} for $d=2,3$).  For lower order approximations  $1\le k<d$,
we use an idea introduced in \cite{GuzmanNeilan14_3D} 
and supplement the velocity space to obtain an inf-sup stable pair.
To this end, 
 we construct vector-valued, piecewise polynomial functions with respect to the refined mesh
 that have the
%
 same trace as the Bernardi-Raugel face bubbles on the skeleton of the macro mesh.
%
 The key difference,  compared to the Bernardi-Raugel face bubbles, is that the divergence of these functions
 are piecewise constant.  The existence of such finite element functions, which we call
 modified face bubbles, is guaranteed by the local inf-sup stability result.
Thus, we supplement $\bpol^c_k$ (for $1\le k<d$)  
locally with these modified face bubbles to get an inf-sup stable pair  on the refined mesh.

We also consider finite elements on the (unrefined) macro mesh. 
We show that $\bpol^c_1-\pol_0$ can be made stable by supplementing the velocity space $\bpol^c_1$ with the modified face bubbles.
Since the divergence of the modified face bubbles are piecewise constant, and thus contained in the pressure space, these finite element will produce divergence-free approximations 
for the Stokes and NSE problems.  These finite elements are developed in arbitrary dimension. The two-dimensional case seems to coincide with a pair of finite elements considered in \cite{ChristiansenHu}.

A final application is inspired by a finite element introduced in \cite{AlfeldSorokina}.  
There, an inf-sup stable and divergence-free macro element pair is constructed in two dimensions
with a piecewise linear, continuous pressure space.
Again, with the help of the modified face bubbles, we extend
these results to arbitrary dimension $d\ge 3$.

Advantages of divergence-free pairs for the Stokes/NSE problems
include, e.g., better stability and error estimates, and the enforcement
of several conservation laws and invariant properties. We refer
the reader to the survey article \cite{JohnLinke17} which highlights
the benefits of divergence-free pairs.
In addition to the above references, several other inf-sup stable pair of spaces that 
produce divergence-free approximations have been constructed.
These include high-order finite elements ($k\ge 2d$)
in two and three dimensions \cite{ScottVogelius85,FalkNeilan13,Neilan15,Zhang11},
as well as lower order pairs supplemented with rational functions \cite{GuzmanNeilan14,GuzmanNeilan14_3D}.
{Advantages of the proposed elements given here
are its relative simplicity and flexibility
with respect to dimension and polynomial 
degree.  The shape functions are piecewise polynomials
and therefore quadrature rules are immediately available.
We mention that the degrees of freedom
of our lowest-order element agree with those
given in \cite{ChristiansenHu}, where divergence-free
Stokes elements with respect to Powell-Sabin partitions
are considered (e.g., in three dimensions, every tetrahedron 
is split into $12$ sub-elements).
 However,
our elements are defined on a less stringent 
barycenter partition, which makes the implementation
simpler.
}

The paper is organized as follows. 
In the next section we introduce some notation used throughout the paper.  
Then, in Section \ref{sectionlocal} a local inf-sup stability result is proved. 
In Section \ref{sectionBernardiRaugel} the Bernardi-Raguel face bubble functions 
and its modification are introduced.   Then in Section \ref{sec:LowOrderNonRefine}
a low-order, divergence-free, and inf-sup pair on the macro mesh is constructed.
Finally, in  Section \ref{sectionrefined}, inf-sup and divergence-free stable finite elements 
are given on the refined mesh.

\section{Notation and Preliminaries}
We consider a family of  shape regular $\{\mct\}$ 
conforming simplicial triangulation of a polytope domain $\Omega\subset \mathbb{R}^d$. For each $K \in \mct$ let $x_K \in K$ 
be an interior point, and consider the refined triangulation  $\mct$  that subdivides each simplex $K$ into $(d+1)$ simplices by 
adjoining the vertices of $K$ with the new vertex $x_K$. The resulting refined triangulation is denoted by $\mct^r$. We assume that the point $x_K$ are chosen so that the family $\{\mct^r \}$ is also shape regular.  If $x_K$ is the barycenter of $K$, which is the most practical choice, then $\mct^r$ is the barycentric refinement of $\mct$. For any simplex $K$ we let $\pol_k(K)$ be the space of polynomials of degree at most $k$ defined on $K$. The vector-valued polynomials on a simplex are given by $\bpol_k(K) =[\pol(K)]^d$.  
We define
\begin{align*}
\mathring{\bpol}_k^c(\mathcal{S}):&=\{\bv\in \bH^1_0(\Omega):\ \bv|_K\in \bpol_k(K),\ \forall K\in \mathcal{S}\},\\
\mathring{\pol}_k(\mathcal{S}) :&= \{q\in L^2_0(\Omega):\ q|_K\in \pol_k(K),\ \forall K\in \mathcal{S}\},
\end{align*}
with either $\mathcal{S} = \mct$ or $\mathcal{S} = \mct^r$.

We 
denote by $K^r$ the triangulation of $K$:
\begin{equation*}
K^r= \{ T \in \mct^r: T \subset K \},
\end{equation*}
and will use the notation
\begin{align}
\mathring{\bpol}_k^c(K^r):&=\{\bv\in \bH^1_0(K):\ \bv|_T\in \bpol_k(T),\ \forall T\in K^r\},\label{polKr1}\\
\mathring{\pol}_k(K^r) :&= \{q\in L^2_0(K):\ q|_T\in \pol_k(T),\ \forall T\in K^r\}. \label{polKr2}
\end{align}

\section{Local inf-sup stability}\label{sectionlocal}
In this section we will prove that $\mathring{\bpol}_k^c(K^r)-{\mathring{\pol}_{k-1}(K^r)}$ is inf-sup stable for each $K \in \mct$. The result can be stated as follows.
%
\begin{theorem}\label{localthm}
Let $k \ge 1$. For any $K \in \mct$ and for any $p \in \mathring{\pol}_{k-1}(K^r)$, there exists $\bv \in \mathring{\bpol}_k^c(K^r)$ such that 
\begin{equation}\label{localthm1}
\dive \bv=p \quad \text{ on } K,
\end{equation}
with the bound
\begin{equation}\label{localthm2}
\|\bv\|_{H^1(K)} \le C \| p\|_{L^2(K)},
\end{equation}
where the constant $C>0$ only depends on $k$ and the shape regularity 
of $K^r$, but is independent of $p$.
\end{theorem}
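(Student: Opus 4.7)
My plan is to pass from the quantitative estimate \eqref{localthm2} to a purely qualitative surjectivity statement via two standard reductions. First, by an affine map sending $K$ to a reference simplex $\hat K$ of unit diameter with $x_K$ mapped to an interior point of $\hat K$, both sides of \eqref{localthm2} rescale by explicit powers of $h_K := \mathrm{diam}\,K$. Shape regularity of $K^r$ forces the refined configuration $\hat K^r$ into a compact family of geometries, so it suffices to prove the estimate on $\hat K^r$ with a constant depending only on $k$ and this compact family. Second, in finite dimensions the estimate \eqref{localthm2} is equivalent to surjectivity of $\dive : \mathring{\bpol}_k^c(K^r) \to \mathring{\pol}_{k-1}(K^r)$, which in turn is equivalent to injectivity of the adjoint: if $q \in \mathring{\pol}_{k-1}(K^r)$ satisfies $\int_K q\,\dive \bv\,dx = 0$ for every $\bv \in \mathring{\bpol}_k^c(K^r)$, then $q \equiv 0$.

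To prove adjoint injectivity, I would integrate by parts and use $\bv|_{\partial K}=0$ to obtain
\[
\sum_{T \in K^r} \int_T \bv \cdot \nabla q\,dx \;=\; \sum_F \int_F [q]\,(\bv \cdot \bn_F)\,ds \qquad \forall\, \bv \in \mathring{\bpol}_k^c(K^r),
\]
where $F$ ranges over the interior $(d-1)$-faces of $K^r$. The plan is to exhibit three families of admissible test functions that together force $q \equiv 0$: those supported in the interior of a single subsimplex $T$ and vanishing on $\partial T$, which pin down $\nabla q|_T$; those supported on the pair of subsimplices sharing an interior face $F$ and vanishing on the remainder of $\partial K$ and on the other interior faces, which control the jump $[q]|_F$; and finally the vertex basis function $\phi_{x_K}$ multiplied by an arbitrary constant vector $\bc \in \R^d$, which extracts the residual piecewise-constant, mean-zero modes of $q$.

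The main obstacle lies in the first two families when $k$ is small: for $k < d+1$ the standard interior simplex bubble has too high a polynomial degree to lie in $\mathring{\bpol}_k^c(K^r)$, so naive bubble-function arguments fail. Here one must genuinely exploit the structure of the barycentric refinement; the interior vertex $x_K$ and the interior faces meeting at it introduce enough continuous piecewise polynomial degrees of freedom in $\mathring{\bpol}_k^c(K^r)$ to separate the three kinds of information carried by $q$ for every $k \ge 1$. The concluding constant-mode step then reduces to a linear algebra fact: the $(d+1) \times (d+1)$ system encoding orthogonality against $\phi_{x_K}\bc$ together with the mean-zero constraint is nonsingular, which traces back to $x_K$ lying in the interior of $K$, so that the gradient of $\phi_{x_K}$ is a nonzero multiple of the inward normal to the opposite face of $K$ on each subsimplex of $K^r$, and these $d+1$ normals span $\R^d$.
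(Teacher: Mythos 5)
Your high-level skeleton is reasonable: the scaling-plus-compactness reduction to a reference configuration is legitimate (shape regularity of $K^r$ does confine $\hat K$ and the image of $x_K$ to a compact family on which a continuous, pointwise-positive inf-sup constant is uniformly bounded below), the duality reduction to injectivity of the adjoint is standard, and your final linear-algebra step is correct and is essentially the same computation the paper performs at the end of its proof: testing against $\lambda_0\bc$ yields $\sum_i q_i|F_i|\bn_i=0$, whose kernel is spanned by $(1,\ldots,1)$ because any $d$ of the normals $\bn_i$ are independent, and the mean-zero constraint then kills the constant mode.

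The genuine gap is the middle of the argument, and you have flagged it yourself without closing it. For $1\le k<d$ both of your first two families of test functions are \emph{empty}: an interior bubble on a $d$-simplex has degree at least $d+1$, and a continuous piecewise polynomial supported on two adjacent subsimplices $K_i\cup K_j$ and vanishing on $\p(K_i\cup K_j)$ has degree at least $d$ (it must contain the product of the $d$ barycentric coordinates of the shared face). Yet for $k\ge 2$ the function $q$ can have nonconstant restrictions and nonzero jumps across the interior faces, so the three families you list cannot separate "gradient, jump, and constant" information in exactly the regime ($k\le d$, $d$ large) that makes the theorem nontrivial. Saying that the interior vertex "introduces enough degrees of freedom" is a restatement of the theorem, not a proof of it. The paper closes this gap with an explicit construction that your outline has no analogue of: every $p\in\pol_{k-1}(K^r)$ is decomposed as $\sum_\ell \lambda_0^\ell\sum_{|\alpha|=k-1-\ell}a_\alpha\bla^\alpha$, and test fields of the special form $\lambda_0^{\ell+1}\sum_\alpha\bs_\alpha\bla^\alpha$ (with $\bs_\alpha$ obtained by solving $A\bs=\bld{r}$ against at most $d$ of the normals, which is possible precisely because $|\alpha|\ge 1$ excludes at least one subsimplex) are used inductively to peel off one layer at a time, pushing the residual into ever higher powers of $\lambda_0$ until only $b\lambda_0^{k-1}$ survives, which is then eliminated by $\bs\lambda_0^{k}$ together with the mean-zero condition. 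Some construction of this kind — or an equally concrete description of $\big(\dive\mathring{\bpol}_k^c(K^r)\big)^\perp$ — is indispensable; without it your proposal proves the theorem only for $k=1$ and for $k\ge d+1$ (where standard bubbles do exist), not for the intermediate degrees.
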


\begin{figure}
\begin{center}
\includegraphics[scale=.8]{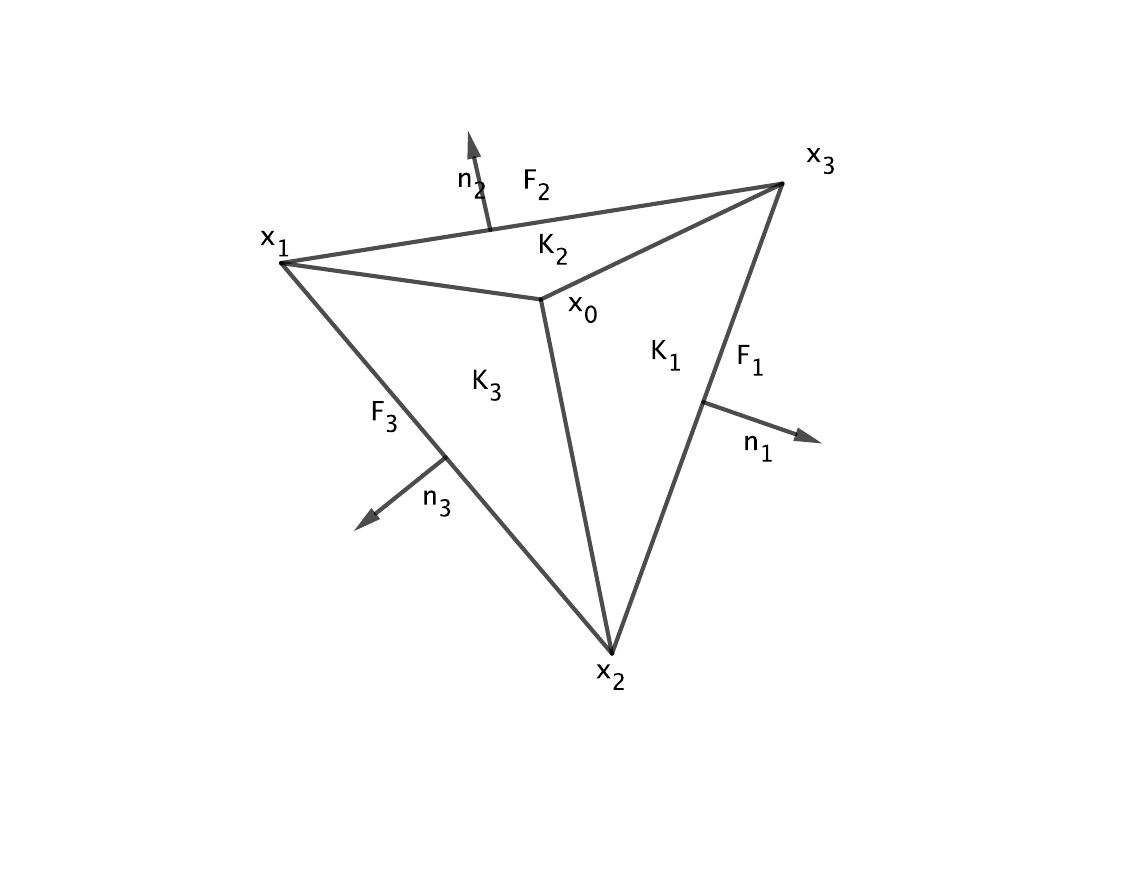}
\caption{Figure of macro triangle}\label{fig1}
\end{center}
\end{figure}

The proof of Theorem \ref{localthm} will follow from several lemmas.   
First, we will need some notation.  For $K\in \mct$,
denote by $S= \{x_1,\ldots,x_{d+1}\}$ 
the set of vertices of $K$, and let $x_0 = x_K$.
Then the refinement of $K$ is given by 
$K^r= \{ K_i\}_{1 \le i \le d+1}$,
where $K_i$ is the simplex with vertices  $\{x_0\}\cup S \backslash \{x_i\} $. 
We let $F_i$ be {the} $(d-1)$ dimensional face of $K$ opposite to $x_i$,
and let $\bn_i$ be the unit-normal to $F_i$ pointing out of $K$; see Figure \ref{fig1}. In addition to \eqref{polKr1}--\eqref{polKr2} we define
the polynomial spaces
\begin{alignat*}{2}
\pol_k(K^r):&=\{ v \in L^2(K):  v|_{K_i} \in  \pol_k(K_i),\ 
1 \le i \le d+1 \},\quad  &&\bpol_k(K^r):= [\pol_k(K^r)]^d,\\
\pol_k^c(K^r):&=\pol_k(K^r) \cap H^1(K), \quad &&\bpol_k^c(K^r):= [\pol_k^c(K^r)]^d.
\end{alignat*} 
Note that $\mathring{\bpol}_k^c(K^r)=\bpol_k^c(K^r) \cap \bld{H}_0^1(K)$ and $\mathring{\pol}_k(K^r)=\pol_k(K^r) \cap L_0^2(K)$.

For $0\le i\le d+1$, we
 let $\lambda_i \in \pol_1^c(K^r)$ be the continuous, piecewise linear function satisfying $\lambda_i( x_j)=\delta_{ij}$.  
We note that $\lambda_i$ vanishes on $K_i$.  For a multi-index $\alpha=(\alpha_1, \alpha_2, \ldots, \alpha_{d+1})$ we will use the notation
\begin{equation*}
\bla^\alpha= \lambda_1^{\alpha_1} \cdots \lambda_{d+1}^{\alpha_{d+1}}.
\end{equation*}
We note that 
\begin{equation}\label{gradl}
 \nabla( \bla^\alpha)= \sum_{j=1}^{d+1}   \alpha_j \bla^{\alpha-e_j}  \nabla \lambda_j, 
\end{equation}
where $\{e_j\}_{j=1}^{d+1}$ is canonical basis of $\R^{d+1}$. We also define 
\begin{equation*}
N(\alpha)=\{ i: \alpha_i > 0 \}.
\end{equation*}
Hence, $\bla^\alpha$ vanishes on  $K_i$ for all $i \in N(\alpha)$.

We let $h_K$ be the diameter of $K$, and let $\rho_K$ be the diameter of the largest ball inscribed in $K$. The shape regularity constant of $K$ is defined by
\begin{equation*}
C_K=\frac{h_K}{\rho_K}.
\end{equation*}
Analogously we let $C_{K_i}$ be the shape regularity constant of $K_i$ (for $i=1,2 \cdots,d+1$), 
and let $\CK=\max_{1 \le i \le d+1 }C_{K_i}$ denote the shape regularity constant of $K^r$. 
We see that $\CK$ is comparable to $C_K$ provided
$x_0$ is sufficiently far from $\p K$.

It is well known that for $ 0\le  i \le d+1$, we have  
\begin{equation*}
\|\nabla \lambda_i\|_{L^\infty(K)} \le \frac{C}{h_K},   
\end{equation*}
where $C$ only depends on  $\CK$. Hence, we also have
\begin{equation}\label{boundbla}
\|\nabla(\bla^\alpha)\|_{L^\infty(K)} \le \frac{C}{h_K},   
\end{equation}
where $C$ only depends on  $\CK$ and $|\alpha|$. 
In particular, we have
%
%
\begin{equation}\label{nabla0}
\nabla \lambda_0 |_{K_i}=-\frac{1}{h_i} \bn_i,
\end{equation}
where $h_i$ is the distance of $x_0$ to the $(d-1)$ dimensional hyperplane
that contains $F_i$. We note that
\begin{equation}\label{nabla0bound}
 h_i \le C h_K  \text{ for } 1 \le i \le d+1, 
\end{equation}
where $C$ depends only on $\CK$.

{Any $d$} unit normals
$\{\bn_{i_1}, \cdots, \bn_{i_d}\}$ where $1\le i_1 < i_2 < \cdots < i_d \le d+1$ are linearly
independent, and thus span $\bbR^d$.
Moreover, the $\ell^2$-norm
of the inverse of the
matrix  $A=[\bn_{i_1}, \bn_{i-2}, \cdots, \bn_{i_d}]^t$ depends on the shape regularity constant of $K$. More precisely, given  any $\bld{r} \in \R^d$ there exists a unique $\bs \in \R^d$ such that 
\begin{equation}\label{A}
A \bs =\bld{r}
\end{equation} 
with 
\begin{equation}\label{solvebs}
|\bs| \le C |\bld{r} |,
\end{equation}
where $C$ depends only on $C_K$.

We are interested in piecewise polynomials of the form 
\begin{equation*}
p=\lambda_0^\ell \sum_{|\alpha| = m} a_{\alpha} \bla^\alpha, \quad a_{\alpha} \in \pol_0(K^r),
\end{equation*}
and $a_{\alpha}|_{K_i}=0 \text{ for } i \in  N(\alpha)$.  A scaling argument shows that 
\begin{equation}\label{l2est}
\sum_{|\alpha|=m} \|a_{\alpha}\|_{L^2(K)}^2 \approx   \|p\|_{L^2(K)}^2,
\end{equation}
where the hidden constants depend
on the shape regularity of $K^r$, $m$ and $\ell$. 
In fact, the following decomposition for $\pol_s(K^r)$ holds. 
The result essentially follows from \eqref{l2est}, so we omit
the details.
%
\begin{lemma}
Every $p \in \pol_{s}(K^r)$ can be written uniquely as
\begin{equation}\label{aux1}
p= \sum_{\ell=0}^s p_\ell, 
\quad p_\ell=\lambda_0^\ell \sum_{|\alpha| = s-\ell} a_{\alpha} \bla^\alpha,
\end{equation}
where $a_{\alpha} \in  \pol_0(K^r)$, and $a_{\alpha}|_{K_i}=0 \text{ for } i \in  N(\alpha)$. Moreover, 
\begin{equation*}
\sum_{\ell=0}^s
\|p_\ell\|_{L^2(K)}^2 \le C \|p\|_{L^2(K)}^2, 
\end{equation*}
where the constant $C$ depends on the shape regularity constant of $K^r$ and $s$.
\end{lemma}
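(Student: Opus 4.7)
My plan is to build the decomposition by patching together a Bernstein-type expansion on each sub-simplex $K_i$ and then to derive the $L^2$-bound from \eqref{l2est} together with a reference-element scaling argument.

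For existence and uniqueness I work locally. Fix $i\in\{1,\ldots,d+1\}$. Since $\lambda_i$ vanishes on $K_i$, the remaining functions $\lambda_0,\lambda_1,\ldots,\lambda_{d+1}$ (with $\lambda_i$ removed) are precisely the barycentric coordinates of $K_i$, and their degree-$s$ monomials form the Bernstein basis of $\pol_s(K_i)$. Grouping these monomials by the exponent $\ell$ of $\lambda_0$ gives the unique local expansion
$$p|_{K_i}=\sum_{\ell=0}^{s}\lambda_0^\ell\sum_{\substack{|\alpha|=s-\ell\\ \alpha_i=0}}a_\alpha^{(i)}\,\bla^\alpha\Big|_{K_i}.$$

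Next, I assemble these into a global decomposition. For each multi-index $\alpha$ with $|\alpha|\le s$ I define a piecewise constant $a_\alpha\in\pol_0(K^r)$ by setting $a_\alpha|_{K_i}:=a_\alpha^{(i)}$ when $\alpha_i=0$ and $a_\alpha|_{K_i}:=0$ otherwise; this immediately enforces $a_\alpha|_{K_i}=0$ for every $i\in N(\alpha)$. With $p_\ell:=\lambda_0^\ell\sum_{|\alpha|=s-\ell}a_\alpha\bla^\alpha$, any summand with $\alpha_i>0$ restricted to $K_i$ vanishes (either via $a_\alpha|_{K_i}=0$ or via $\bla^\alpha|_{K_i}=0$), so $\sum_{\ell=0}^s p_\ell$ reproduces the local Bernstein expansion on each $K_i$ and hence equals $p$. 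Uniqueness of the global decomposition then reduces to uniqueness of the Bernstein expansion on each $K_i$.

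For the stability estimate, \eqref{l2est} applied with $m=s-\ell$ yields $\|p_\ell\|_{L^2(K)}^2\approx\sum_{|\alpha|=s-\ell}\|a_\alpha\|_{L^2(K)}^2$ with constants depending only on $\CK$ and $s$, so it suffices to bound $\sum_{\ell}\sum_{|\alpha|=s-\ell}\|a_\alpha\|_{L^2(K)}^2$ by $C\|p\|_{L^2(K)}^2$. This is exactly boundedness of the linear isomorphism $p\mapsto(a_\alpha)_{\ell,\alpha}$ between two finite-dimensional spaces, which follows by a standard scaling to a reference macro element with comparable shape-regularity constant, combined with norm equivalence on the reference. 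The only mildly delicate point in the whole argument is the indexing needed to see that the local Bernstein coefficients glue coherently into single piecewise constants $a_\alpha$ satisfying the vanishing condition $a_\alpha|_{K_i}=0$ for $i\in N(\alpha)$; beyond that bookkeeping, no real obstacle arises.
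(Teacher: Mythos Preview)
Your argument is correct: the identification of $\{\lambda_j|_{K_i}:j\neq i\}$ as barycentric coordinates of $K_i$ gives the Bernstein expansion and hence existence/uniqueness, and the stability bound via \eqref{l2est} together with a reference-element scaling (with the compactness over admissible positions of $\hat x_0$ implicit in the shape-regularity dependence) is the standard route. The paper in fact omits the proof entirely, noting only that the result ``essentially follows from \eqref{l2est}'', so your write-up simply supplies the natural details consistent with that remark.
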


As mentioned earlier,
we will prove Theorem \ref{localthm}
 in several steps. 
First, we establish the following lemma.
%
\begin{lemma}\label{firststep}
Let $p= \lambda_0^\ell \sum_{|\alpha| = m} b_{\alpha} \bla^\alpha$  with $m \ge 1$,  $b_{\alpha} \in \pol_0(K^r)$, with $b_{\alpha}|_{K_i}=0$ for all $i \in N(\alpha)$. Then there exists $\bv \in \mathring{\bpol}_{\ell+m+1}^c(K^r)$ such that 
\begin{equation}\label{755}
\dive \bv =p +  q\text{ on } K,
\end{equation}
where $q$ is of the form $q=\lambda_0^{\ell+1} \sum_{|\alpha| = m-1} c_{\alpha} \bla^\alpha$ with  $c_{\alpha} \in \pol_0(K^r)$, with $c_{\alpha}|_{K_i}=0$ for all $i \in N(\alpha)$.
Moreover, the following bounds hold
\begin{equation}\label{l2v}
\|\bv \|_{H^1(K)} \le C \|p\|_{L^2(K)},\qquad \|q\|_{L^2(K)}\le C \|p\|_{L^2(K)}.
\end{equation} 
\end{lemma}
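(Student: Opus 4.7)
The plan is to construct $\bv$ via the ansatz
$$\bv \;=\; \lambda_0^{\ell+1}\sum_{|\alpha|=m} \bla^\alpha \bld{a}_\alpha,$$
with each $\bld{a}_\alpha\in \R^d$ a \emph{global} constant vector. This makes $\bv$ continuous across the internal faces of $K^r$ automatically, and since $\lambda_0 \equiv 0$ on $\partial K$ and $\ell+1\ge 1$, we also have $\bv = 0$ on $\partial K$; hence $\bv\in \mathring{\bpol}^c_{\ell+m+1}(K^r)$. The vectors $\bld{a}_\alpha$ are to be chosen so that the $\lambda_0^\ell$-part of $\dive\bv$ matches $p$ on each sub-simplex $K_i$, with the remaining $\lambda_0^{\ell+1}$-part automatically producing $q$ of the advertised form.

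Using \eqref{gradl}, \eqref{nabla0}, and the fact that $\bld{a}_\alpha$ is constant, a direct computation on $K_i$ gives
\begin{equation*}
\dive\bv|_{K_i} = -\frac{\ell+1}{h_i}\lambda_0^\ell \sum_{|\alpha|=m} \bla^\alpha(\bn_i\cdot\bld{a}_\alpha) \;+\; \lambda_0^{\ell+1}\sum_{|\alpha|=m}\sum_{j=1}^{d+1}\alpha_j \bla^{\alpha-e_j}\bigl(\nabla\lambda_j|_{K_i}\cdot\bld{a}_\alpha\bigr).
\end{equation*}
On $K_i$ we have $\bla^\alpha=0$ whenever $i\in N(\alpha)$, while by hypothesis $b_\alpha|_{K_i}=0$ for $i\in N(\alpha)$; hence matching the first sum with $p|_{K_i} = \lambda_0^\ell\sum_{|\alpha|=m,\,\alpha_i=0}\bla^\alpha b_\alpha|_{K_i}$ and invoking the linear independence of $\{\bla^\alpha|_{K_i} : \alpha_i=0\}$ reduces to the algebraic conditions
$$\bn_i\cdot \bld{a}_\alpha \;=\; -\frac{h_i}{\ell+1}\, b_\alpha|_{K_i}\qquad \text{for every } i \in \{1,\dots,d+1\}\setminus N(\alpha).$$
For each $\alpha$ with $|\alpha|=m\ge 1$ we have $|N(\alpha)|\ge 1$, so this is a system of at most $d$ linear equations in $\bld{a}_\alpha\in\R^d$. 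Because any $d$ of the outer unit normals $\bn_1,\dots,\bn_{d+1}$ are linearly independent, \eqref{A}--\eqref{solvebs} together with \eqref{nabla0bound} furnish a solution with $|\bld{a}_\alpha|\le C h_K\max_i |b_\alpha|_{K_i}|$, where $C$ depends only on $\CK$ and $m$.

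With this choice the first sum above equals $p|_{K_i}$. For the second sum, reindex via $\beta = \alpha - e_j$ and note that $\bla^{\alpha-e_j}|_{K_i}\ne 0$ forces $(\alpha-e_j)_i = 0$, i.e.\ $\beta_i = 0$; therefore
$$\dive\bv|_{K_i}-p|_{K_i} \;=\; \lambda_0^{\ell+1}\sum_{\substack{|\beta|=m-1\\ \beta_i=0}} \bla^\beta\, c_\beta|_{K_i},\qquad c_\beta|_{K_i}:=\sum_{j=1}^{d+1}(\beta_j+1)\bigl(\nabla\lambda_j|_{K_i}\cdot\bld{a}_{\beta+e_j}\bigr).$$
Setting $c_\beta|_{K_i}:=0$ when $i\in N(\beta)$ (which does not change the value of $q$ since $\bla^\beta|_{K_i}=0$ there) gives $q$ in precisely the advertised form with $c_\beta\in\pol_0(K^r)$. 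The estimates in \eqref{l2v} then follow from \eqref{boundbla}, \eqref{nabla0bound}, the bound on $|\bld{a}_\alpha|$ above, and \eqref{l2est} applied to both $p$ and $q$. The principal obstacle in this argument is reconciling global continuity of $\bv$ with the need to match $p$ locally on each $K_i$; the resolution is to let $\bld{a}_\alpha$ depend only on $\alpha$ and not on the sub-simplex, and what makes this feasible is the counting $d+1-|N(\alpha)|\le d$, which uses exactly the assumption $m\ge 1$.
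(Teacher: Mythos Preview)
Your proof is correct and follows essentially the same approach as the paper's. Your $\bld{a}_\alpha$ are exactly the paper's $\bs_\alpha$: both use the ansatz $\bv=\lambda_0^{\ell+1}\sum_{|\alpha|=m}\bs_\alpha\bla^\alpha$ with global constant vectors determined by the at-most-$d$ conditions $(\ell+1)\bs_\alpha\cdot\nabla\lambda_0|_{K_i}=b_\alpha|_{K_i}$ for $i\notin N(\alpha)$, and both identify $q$ as the $\lambda_0^{\ell+1}$ term of the divergence; you are simply more explicit about the reindexing $\beta=\alpha-e_j$ and about redefining $c_\beta|_{K_i}=0$ for $i\in N(\beta)$.
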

\begin{proof}
Since $|\alpha|=m \ge 1$,  the set 
$\{ i : i \notin N(\alpha)\}$ has at most $d$ elements.  Using the relation \eqref{nabla0}, solvability
of \eqref{A}, and the estimates \eqref{nabla0bound}, \eqref{solvebs},
we conclude that there exists  $\bs_{\alpha} \in \R^{d}$ such that
\begin{equation*}
(\ell+1) \bs_{\alpha} \cdot \nabla \lambda_0 |_{K_i}= b_{\alpha}|_{K_i} \text{ for  all } i \notin N(\alpha),
\end{equation*}
with the bound
\begin{equation}\label{aux143}
 |\bs_{\alpha}| \le  C  h_K \|b_{\alpha}\|_{L^\infty(K)},
\end{equation}
where the constant $C$ depends on $\CK$ and $\ell$.

Define $\bv= \lambda_0^{\ell+1} \sum_{|\alpha|=m} \bs_{\alpha} \bla^\alpha$ so that  $\bv \in \mathring{\bpol}_{\ell+m+1}^c(K^r)$, and 
\begin{alignat}{1}
\dive \bv= (\ell+1) \lambda_0^{\ell} \sum_{|\alpha|=m} (\bs_{\alpha} \cdot \nabla \lambda_0) \bla^\alpha+  \lambda_0^{\ell+1} \sum_{|\alpha|=m} \bs_{\alpha} \cdot \nabla( \bla^\alpha) =& p+ q,
\end{alignat}
where 
\begin{equation}\label{eqn:qBsForm}
q=\lambda_0^{\ell+1} \sum_{|\alpha|=m} \bs_{\alpha} \cdot \nabla( \bla^\alpha).
\end{equation}
Using \eqref{gradl} we see that we can write $q$ in the desired form. 

To obtain the bound \eqref{l2v},  we apply \eqref{boundbla} and an inverse estimate:
\begin{equation*}
\|\nabla \bv \|_{L^2(K)} \le C h_K^{d/2} \|\nabla \bv \|_{L^\infty(K)} \le C \, h_K^{d/2-1} \max_{ |\alpha|=m} |\bs_{\alpha}|.
\end{equation*}
Therefore by \eqref{aux143} we get 
\begin{equation*}
\|\nabla \bv \|_{L^2(K)} \le h_K^{d/2}\max_{ |\alpha|=m} \|b_\alpha\|_{L^\infty(K)}  \le  \max_{ |\alpha|=m} \|b_\alpha\|_{L^2(K)} \le  (\sum_{|\alpha|=m}  \|b_\alpha\|_{L^2(K)}^2 )^{1/2}.
\end{equation*}
Applying
 \eqref{l2est} and Friedrich's inequality to this last estimate,
 we obtain
\begin{equation*}
\|\bv\|_{H^1(K)}\le C \|\nabla \bv\|_{L^2(K)} \le C \| p\|_{L^2(K)}, 
\end{equation*}
which is the first bound in \eqref{l2v}.  Finally, 
the second bound in \eqref{l2v} follows from  {\eqref{755} and  the triangle inequality}.

\end{proof}

Using the previous result repeatedly, we prove the following result.
\begin{lemma}\label{lem:Repeated}
For any $p \in \pol_s(K^r)$,  there exists $\bv \in \mathring{\bpol}_{s+1}^c(K^r)$ such that 
\begin{equation}\label{rep}
\dive \bv= p+ b \lambda_0^s,
\end{equation}
where $b \in \pol_0(K^r)$. Moreover, the following bounds hold
\begin{equation}\label{lemmabound2}
\|\bv\|_{H^1(K)}\le C\|p\|_{L^2(K)},\qquad
\|b \lambda_0^s \|_{L^2(K)} \le C \|p\|_{L^2(K)},
\end{equation}
where the constant $C>0$ depends on $s$ and $\CK$.
\end{lemma}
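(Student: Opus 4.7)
The plan is to apply Lemma \ref{firststep} iteratively, using the decomposition \eqref{aux1} as the scaffolding. First I decompose $p = \sum_{\ell=0}^{s} p_\ell$ with $p_\ell = \lambda_0^\ell \sum_{|\alpha|=s-\ell} a_\alpha \bla^\alpha$. Each $p_\ell$ for $\ell \le s-1$ has exactly the structural form required by the hypothesis of Lemma \ref{firststep} (with the parameter $m = s-\ell \ge 1$ there), so it can be fed into that lemma to produce a velocity $\bv_\ell$ whose divergence matches $p_\ell$ up to a remainder that is one power of $\lambda_0$ higher.

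More precisely, I set $r_0 := p_0$ and define $\bv_\ell$ and $q_\ell$ inductively for $\ell = 0, 1, \ldots, s-1$ by applying Lemma \ref{firststep} to $r_\ell$: $\dive \bv_\ell = r_\ell + q_\ell$, where $q_\ell = \lambda_0^{\ell+1} \sum_{|\alpha|=s-\ell-1} c_\alpha^{(\ell)} \bla^\alpha$. Then I let $r_{\ell+1} := p_{\ell+1} - q_\ell$, which still has the structural form required to feed back into Lemma \ref{firststep}. Each $\bv_\ell$ lies in $\mathring{\bpol}_{\ell+(s-\ell)+1}^c(K^r) = \mathring{\bpol}_{s+1}^c(K^r)$, so the total sum $\bv := \sum_{\ell=0}^{s-1} \bv_\ell$ lies in $\mathring{\bpol}_{s+1}^c(K^r)$ as required.

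Computing $\dive \bv$ produces a telescoping cancellation: using $r_0 = p_0$ and $r_\ell = p_\ell - q_{\ell-1}$ for $\ell \ge 1$, the intermediate remainders $q_0, \ldots, q_{s-2}$ cancel exactly against the corresponding pieces in $r_1, \ldots, r_{s-1}$, leaving $\dive \bv = \sum_{\ell=0}^{s-1} p_\ell + q_{s-1} = p - p_s + q_{s-1}$. Since $|\alpha|=0$ forces $\bla^\alpha = 1$, both $p_s$ and $q_{s-1}$ are of the form $\lambda_0^s \cdot(\text{piecewise constant on } K^r)$, so their difference is $b\lambda_0^s$ for some $b \in \pol_0(K^r)$, which is precisely \eqref{rep}. (The degenerate case $s=0$ is handled by taking $\bv = 0$ and $b = -p$.)

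For the bounds \eqref{lemmabound2}, I iterate the two stability estimates of Lemma \ref{firststep}, namely $\|\bv_\ell\|_{H^1(K)} \le C\|r_\ell\|_{L^2(K)}$ and $\|q_\ell\|_{L^2(K)} \le C\|r_\ell\|_{L^2(K)}$. A short induction on $\ell$ using $r_{\ell+1} = p_{\ell+1} - q_\ell$, together with the decomposition bound $\sum_\ell \|p_\ell\|_{L^2(K)}^2 \le C\|p\|_{L^2(K)}^2$, yields $\|r_\ell\|_{L^2(K)} \le C\|p\|_{L^2(K)}$ with constants depending only on $s$ and $\CK$; summing over $\ell$ then gives the $H^1$ bound on $\bv$, and the $L^2$ bound on $b\lambda_0^s = q_{s-1} - p_s$ follows from a triangle inequality. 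No single step is hard; the main conceptual point is to recognize that Lemma \ref{firststep} provides exactly one rung of the induction ladder set up by \eqref{aux1}, and that the telescoping is clean, so the only nontrivial residue is the top term $\lambda_0^s \cdot \pol_0(K^r)$ that cannot be eliminated by the lemma itself.
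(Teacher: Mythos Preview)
Your proof is correct and follows essentially the same approach as the paper: decompose $p$ via \eqref{aux1}, apply Lemma~\ref{firststep} iteratively to $p_0$, then $p_1-q_0$, then $p_2-q_1$, etc., and sum the resulting $\bv_\ell$'s. Your write-up is in fact slightly more explicit than the paper's (you spell out the telescoping identity $\dive \bv = p - p_s + q_{s-1}$ and treat the degenerate case $s=0$), but the argument is the same.
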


\begin{proof}
Let $p\in \pol_s(K^r)$ be fixed
and write $p = \sum_{\ell=0}^s p_\ell$
as in \eqref{aux1}.
We then  conclude from Lemma \ref{firststep} that there exists $\bv_0 \in \mathring{\bpol}_{s+1}^c(K^r)$ 
and  $q_1=\lambda_0 \sum_{|\alpha|=s-1} c_{\alpha} \bla^\alpha$
such that 
\begin{equation*}
\dive \bv_0=p_0 + q_1,\qquad \|\bv_0\|_{H^1(K)}\le C\|p_0 \|_{L^2(K)},\qquad \|q_1\|_{L^2(K)}\le C\|p_0\|_{L^2(K)}.
\end{equation*}
%
Therefore, 
\begin{equation*}
p=\dive \bv_0+ (p_1-q_1) +p_2+ \cdots+ p_s.
\end{equation*}

Again, applying Lemma \ref{firststep} there exists $\bv_1 \in \mathring{\bpol}_{s+1}^c(K^r)$  so that 
\begin{equation*}
\dive \bv_1=(p_1-q_1)+ q_2,
\end{equation*}
where $q_2=\lambda_0^2 \sum_{|\alpha|=s-2} c_{\alpha} \bla^\alpha$ with the corresponding bounds. We can then continue this process 
to construct $\bv_2, \ldots, \bv_{s-1} \subset \bpol_{s+1}^c(K^r)$, and we set 
\begin{equation*}
\bv= \bv_0+ \cdots+ \bv_{s-1}.
\end{equation*}
The identity \eqref{rep} as well as the estimates \eqref{lemmabound2} are immediate.
\end{proof}

We can now prove Theorem \ref{localthm}.
\begin{myproof}[Theorem \ref{localthm}]
Let $p \in \mathring{\pol}_{k-1}(K^r)$. By Lemma \ref{lem:Repeated}, there exists $\bv_1 \in \mathring{\bpol}_{k}^c(K^r)$ such that 
\begin{equation*}
\dive \bv_1= p+ b \lambda_0^{k-1}\ \text{ for some } b\in \pol_0(K^r),
\end{equation*}
with the bounds
\begin{equation}\label{bv1}
\|\bv_1\|_{H^1(K)} \le C \|p\|_{L^2(K)},\qquad \| b \lambda_0^{k-1}\|_{L^2(K)} \le C \|p\|_{L^2(K)}.
\end{equation}

Since $p,\dive  \bv \in \mathring{\pol}_{k-1}(K^r)$ we have that 
\begin{equation}\label{aux307}
b \lambda_0^{k-1} \in \mathring{\pol}_{k-1}(K^r).
\end{equation}

Using \eqref{nabla0} and the solvability
of problem \eqref{A}, there exists $\bs\in \bbR^d$
such that
\begin{equation*}
k \bs \cdot \nabla \lambda_0|_{K_i}=b|_{K_i} \quad \text{ for all } 2 \le i \le d+1,
\end{equation*}
Set $\bv_2 = \bs \lambda_0^k$. From \eqref{solvebs}
and scaling, we have
\begin{equation}\label{bv2}
\|\bv_2\|_{H^1(K)} \le C \| b \lambda_0^{k-1}\|_{L^2(K)}.
\end{equation}
Moreover, 
\begin{align*}
\Div \bv_2|_{K_i} = k \lambda_0^{k-1} \bs \cdot \nab \lambda_0|_{K_i} = b \lambda_0^{k-1}|_{K_i},\qquad \text{for all }2\le i\le d+1.
\end{align*}
Since $\Div \bv_2 - b\lambda_0^{k-1}\in \mathring{\pol}_{k-1}(K^r)$, we have
\begin{align*}
0 = \int_K \big(\Div \bv_2 - b\lambda_0^{k-1}\big) = \int_{K_1} \big(\Div \bv_2 - b\lambda_0^{k-1}\big) = \int_{K_1} \lambda_0^{k-1}(\bs \cdot \nab \lambda_0 - b).
\end{align*}
Therefore $\bs\cdot \nab \lambda_0|_{K_1} = b|_{K_1}$, implying that $\Div \bv_2|_{K_1} = b\lambda_0^{k-1}|_{K_1}$ and so $\Div \bv_2 = b\lambda_0^{k-1}$ on $K$.
%
We  then set  $\bv=\bv_1+ \bv_2$. Then \eqref{localthm1} holds
and the 
bound \eqref{localthm2} follows from \eqref{bv1} and \eqref{bv2}. 
\end{myproof}

\section{The Bernardi-Raugel bubble {and} its modification}\label{sectionBernardiRaugel}
In this section we recall the\\ Bernardi-Raugel face bubbles (cf.~\cite{BernardiRaugel}) and summarize 
their stability properties.  Then, using Theorem \ref{localthm}, we  propose a modification of these 
bubble functions such that the resulting vector fields have constant divergence.

Recall, that for a simplex $K\in \mct$, the vertices are denoted
by $\{x_1,x_2,\ldots,x_{d+1}\}$, and that $F_i$ is the $(d-1)$-dimensional
face of $K$ opposite to $x_i$ with outward unit normal $\bn_i$.
We denote by $\mu_i\in \pol_1(K)$ the barycentric coordinates of $K$, i.e., $\mu_i(x_j)= \delta_{ij}$.
We define  scalar face bubbles as 
\begin{align*}
B_i := \mathop{\prod_{j=1}^n}_{j\neq i} \mu_j,\quad  \text{ for } 1 \le i \le d+1.
\end{align*}
The Bernardi-Raugel face bubbles are given as 
\begin{equation*}
\bb_i:=B_i \bn_i \quad  \text{ for } 1 \le i \le d+1.
\end{equation*}
We note that $\bb_i \in \bpol_d(K)$. 

We define the local Bernardi-Raugel bubble space as follows:
\begin{equation*}
\bV^\text{BR}(K):= \text{span} \{ \bb_1, \ldots, \bb_{d+1} \}.
\end{equation*}
The corresponding global space is given by
\begin{equation*}
\bV_h^\text{BR}:=\{ \bv \in \bld{H}_0^1(\Omega): \bv|_K \in \bV^\text{BR}(K), \text{ for all } K \in \mct \}.
\end{equation*}
Note that this space does not have the piecewise linear functions as the Bernardi-Raugel space has, but it is well known that the linear functions are only needed for approximation (not for stability).

The following result is well known (or can easily be proven); see \cite{BernardiRaugel}.
%
\begin{proposition}\label{bernardiraugel}
For any $p \in \mathring{\pol}_0(\mct)$, there exists a  $\bv \in \bV_h^\text{BR}$ so that 
\begin{equation*}
\int_{K} \dive \bv=\int_{K} p \qquad \text{ for all } K \in \mct, 
\end{equation*}
with the bound
\begin{equation*}
\|\bv\|_{H^1(\Omega)} \le C \| p \|_{L^2(\Omega)}.  
\end{equation*}
\end{proposition}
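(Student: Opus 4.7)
The plan is to follow the classical Fortin-operator approach for Bernardi-Raugel elements (cf.~\cite{BernardiRaugel}). The strategy is: (i) lift $p$ to a continuous velocity using Bogovskii's construction; (ii) define a bubble-based Fortin operator $\Pi$ preserving normal fluxes across every face; (iii) conclude the divergence identity via the divergence theorem and deduce the bound from the stability of $\Pi$.

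In detail, given $p\in\mathring{\pol}_0(\mct)\subset L^2_0(\Omega)$, the continuous divergence surjectivity on the polytopal Lipschitz domain $\Omega$ produces $\bw\in\bld H_0^1(\Omega)$ satisfying $\dive\bw = p$ and $\|\bw\|_{H^1(\Omega)}\le C_\Omega\|p\|_{L^2(\Omega)}$. Next, fixing a unit normal $\bn_F$ for each interior face $F$, define $\Pi:\bld H_0^1(\Omega)\to\bV_h^{\text{BR}}$ by
\begin{equation*}
\Pi\bu := \sum_{F\in\mcei} c_F(\bu)\,\bb_F,\qquad c_F(\bu) := \frac{\int_F \bu\cdot\bn_F}{\int_F B_F}.
\end{equation*}
Since $\bb_F\cdot\bn_F|_F = B_F$ while $\bb_F$ vanishes identically on every other face of $\mct$, this definition enforces $\int_F(\Pi\bu - \bu)\cdot\bn_F = 0$ on every interior face, and the same identity is trivially true on boundary faces because $\bu\in\bld H_0^1(\Omega)$. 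Setting $\bv:=\Pi\bw$, the divergence theorem then yields, for every $K\in\mct$,
\begin{equation*}
\int_K\dive\bv = \sum_{F\subset\p K}\int_F\bv\cdot\bn_{K,F} = \sum_{F\subset\p K}\int_F\bw\cdot\bn_{K,F} = \int_K\dive\bw = \int_K p,
\end{equation*}
which is the desired divergence identity.

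The main obstacle is the stability bound $\|\Pi\bw\|_{H^1(\Omega)}\le C\|\bw\|_{H^1(\Omega)}$; combined with the first step this gives $\|\bv\|_{H^1(\Omega)}\le C\|p\|_{L^2(\Omega)}$. A direct element-by-element estimate of $|c_F(\bu)|$ via the trace inequality introduces a non-uniform factor $h_K^{-1}\|\bu\|_{L^2(K)}$, so a patch-based argument is needed. The standard remedy is to pre-subtract a Scott-Zhang (or Cl\'ement) interpolant $I_h\bu\in \mathring{\bpol}_1^c(\mct)$: the residual $\bu - I_h\bu$ satisfies patch-Poincar\'e bounds such as $\|\bu - I_h\bu\|_{L^2(F)}\les h_K^{1/2}|\bu|_{H^1(\ome_K)}$, which combined with $\int_F B_F\approx h_K^{d-1}$ and $|\bb_F|_{H^1(K)}\les h_K^{d/2-1}$ yields the patch-local stability $\|\Pi(\bu - I_h\bu)\|_{H^1(K)}\les \|\bu\|_{H^1(\ome_K)}$, where $\ome_K$ denotes the patch of elements adjacent to $K$. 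The remaining contribution from $\Pi(I_h\bu)$ is handled by exploiting the explicit piecewise-linear structure of $I_h\bu$ together with inverse and trace estimates. Summing the local bounds and using finite overlap of the patches $\ome_K$ produces the required global estimate.
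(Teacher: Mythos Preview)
The paper does not prove this proposition; it is asserted as well known with a reference to \cite{BernardiRaugel}. Your approach---Bogovskii lift followed by the face-moment Fortin projector $\Pi$---is natural and in fact mirrors the ``simple scaling argument'' the paper later invokes for the modified bubbles in Theorem~\ref{modifiedbubble}. The gap is in your stability estimate for $\Pi(I_h\bw)$: trace and inverse inequalities give at best
\[
\|\Pi(I_h\bw)\|_{H^1(K)}\ \lesssim\ h_K^{-1}\|I_h\bw\|_{L^2(K)}+|I_h\bw|_{H^1(K)},
\]
and the first term is not controlled by $\|\bw\|_{H^1(\omega_K)}$. No combination of inverse and trace estimates removes the $h_K^{-1}$ weight, because $\Pi$ is genuinely not uniformly $H^1$-bounded: test it on any fixed smooth $\bw\in\bH_0^1(\Omega)$; the coefficients satisfy $c_F=O(1)$, so $|\Pi\bw|_{H^1(K)}\approx h_K^{d/2-1}$ and $\|\Pi\bw\|_{H^1(\Omega)}\approx h^{-1}$ as $h\to 0$.

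The obstruction is in fact intrinsic to the statement, not just to your proof. Take $p\in\mathring{\pol}_0(\mct)$ to be the piecewise-constant projection of a fixed smooth mean-zero function. For \emph{any} $\bv\in\bV_h^{\text{BR}}$ with $\int_K\dive\bv=\int_K p$ one has $\|p\|_{L^2}^2=(p,\dive\bv)=\sum_F[p]_F\,c_F\!\int_F B_F$; since $|[p]_F|=O(h)$ while $\|\bv\|_{H^1}^2\approx h^{d-2}\sum_F c_F^2$, Cauchy--Schwarz forces $\|\bv\|_{H^1(\Omega)}\gtrsim h^{-1}\|p\|_{L^2(\Omega)}$. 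Thus, contrary to the remark preceding the proposition, the piecewise linears are needed for stability, not only approximation. The paper's downstream results are unaffected because every velocity space used later contains $\mathring{\bpol}_1^c(\mct)$; what is really being invoked is the (true) stability of $\mathring{\bpol}_1^c(\mct)+\bV_h^{\text{BR}}$ versus $\mathring{\pol}_0(\mct)$, and your argument---with the Scott--Zhang part $I_h\bw$ kept in the velocity rather than pushed through $\Pi$---does establish exactly that.
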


In the next result, we modify the function $\bb_i$ 
so that the resulting vector-valued function has constant divergence.

\begin{proposition}\label{prop:Bubbles}
There exists $\bbeta_i\in \bpol_d^c(K^r)$ such that
\begin{align}\label{eqn:bviDef}
\bbeta_i|_{\p K} = \bb_i|_{\p K},\quad \Div \bbeta_i\in \pol_0(K),
\end{align}
with the bound 
\begin{equation*}
\|\bbeta_i \|_{H^1(K)} \le C\|\bb_i \|_{H^1(K)}.
\end{equation*}

\end{proposition}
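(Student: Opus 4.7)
The plan is to write $\bbeta_i = \bb_i + \bw$ for a correction $\bw \in \mathring{\bpol}_d^c(K^r)$ chosen so that the divergence becomes constant. This additive decomposition automatically gives $\bbeta_i|_{\p K} = \bb_i|_{\p K}$ (since $\bw$ vanishes on $\p K$), and $\bbeta_i$ lies in $\bpol_d^c(K^r)$ because $\bb_i \in \bpol_d(K) \subset \bpol_d^c(K^r)$ and $\bw \in \mathring{\bpol}_d^c(K^r)$.

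First I would pin down the target constant. Since $\bb_i = B_i \bn_i$ with $B_i \in \pol_d(K)$ vanishing on $F_j$ for all $j\neq i$, an application of the divergence theorem gives
\begin{equation*}
\int_K \Div \bb_i = \int_{\p K} B_i (\bn_i \cdot \bn) \,ds = \int_{F_i} B_i\, ds,
\end{equation*}
so the natural constant is $c := \tfrac{1}{|K|}\int_{F_i} B_i\, ds$. Then I would set $p := c - \Div \bb_i$. Since $\bb_i$ is a single polynomial of degree $d$ on $K$, we have $\Div \bb_i \in \pol_{d-1}(K) \subset \pol_{d-1}(K^r)$, and by construction $\int_K p\, dx = 0$, so $p \in \mathring{\pol}_{d-1}(K^r)$.

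Now I would invoke Theorem \ref{localthm} with $k = d$: there exists $\bw \in \mathring{\bpol}_d^c(K^r)$ with
\begin{equation*}
\Div \bw = p = c - \Div \bb_i \quad \text{on } K, \qquad \|\bw\|_{H^1(K)} \le C \|p\|_{L^2(K)}.
\end{equation*}
Setting $\bbeta_i := \bb_i + \bw$ then yields $\Div \bbeta_i = c \in \pol_0(K)$ and, since $\|p\|_{L^2(K)} \le |c|\,|K|^{1/2} + \|\Div \bb_i\|_{L^2(K)} \le C\|\bb_i\|_{H^1(K)}$, the triangle inequality gives the desired $H^1$ bound.

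There is no real obstacle here — Theorem \ref{localthm} does all the heavy lifting. The only small thing to verify carefully is that the residual $c - \Div \bb_i$ has mean zero over $K$ (so it really lies in $\mathring{\pol}_{d-1}(K^r)$), which is exactly how $c$ was chosen, and that $\|p\|_{L^2(K)}$ is controlled by $\|\bb_i\|_{H^1(K)}$, which follows from the definition of $c$ together with $\|\Div \bb_i\|_{L^2(K)} \le \|\nabla \bb_i\|_{L^2(K)}$.
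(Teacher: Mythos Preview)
Your proof is correct and essentially identical to the paper's: the paper sets $g_i = \Div \bb_i - |K|^{-1}\int_K \Div \bb_i$, applies Theorem \ref{localthm} to obtain $\bw_i\in \mathring{\bpol}_d^c(K^r)$ with $\Div \bw_i = g_i$, and defines $\bbeta_i = \bb_i - \bw_i$, which differs from your construction only by a sign convention on the correction. Your explicit identification of the constant as $|K|^{-1}\int_{F_i} B_i$ via the divergence theorem is a minor extra observation the paper omits.
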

\begin{proof}
Set
\begin{align*}
g_i = \dive \bb_i - \frac{1}{|K|} \int_K \dive \bb_i\in \mathring{\pol}_{d-1}(K) \subset \mathring{\pol}_{d-1}(K^r).
\end{align*}
By Theorem \ref{localthm}, there exists $\bw_i\in \mathring{\bpol}^c_d(K^r)$ such that 
\begin{align}\label{eqn:bwiDef}
\dive \bw_i = g_i \quad \text{ on } K,\qquad \|\bw_i\|_{H^1(K)}\le C\|g_i\|_{L^2(K)}.
\end{align}
The function $\bbeta_i :=\bb_i - \bw_i$ then satisfies \eqref{eqn:bviDef}
since $\bbeta_i|_{\p K} = \bb_i|_{\p K}$ and $\dive \bbeta_i = \dive \bb_i - \dive \bw_i = |K|^{-1}\int_K \Div \bb_i$.
The stability estimate follows from \eqref{eqn:bwiDef} and the  bound $\|g_i\|_{L^2(K)}\le C\|\bb_i\|_{H^1(K)}$.
\end{proof}

We let $\{ \bbeta_1, \bbeta_2, \ldots, \bbeta_{d+1}\} \subset \bpol_d^c(K^r)$ 
be functions satisfying the conditions in 
Proposition \ref{prop:Bubbles} (note that they are not necessarily unique for $d\ge 3$, but we 
have fixed them). We call these functions the modified face bubbles. 
We define the local finite element space of these functions as follows:
\begin{equation*}
\MV(K):= \text{span} \{ \bbeta_1, \ldots, \bbeta_{d+1} \}.
\end{equation*}
%
\begin{lemma}\label{lem:MVDOFs}
A function $\bv \in \MV$ is uniquely determined by 
\begin{equation}\label{dofmbubbleLocal}
\int_{F_i} \bv \cdot \bn_i\quad  \text{ for } 1 \le i \le d+1. 
\end{equation}
\end{lemma}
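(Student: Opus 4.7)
The plan is to show that the degrees of freedom in \eqref{dofmbubbleLocal} form a unisolvent set on $\MV(K)$. Since $\dim \MV(K)\le d+1$ and there are exactly $d+1$ functionals, it suffices to verify injectivity: if $\bv\in \MV(K)$ annihilates all the functionals, then $\bv=0$. As a byproduct, this will also force $\{\bbeta_1,\ldots,\bbeta_{d+1}\}$ to be linearly independent so that $\dim \MV(K)=d+1$.

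Concretely, I would write any $\bv\in \MV(K)$ as $\bv=\sum_{i=1}^{d+1} c_i\bbeta_i$ and compute the resulting $(d+1)\times(d+1)$ linear system for the coefficients $(c_i)$. The key observation is that by Proposition \ref{prop:Bubbles} the trace of $\bbeta_i$ on $\p K$ coincides with the Bernardi--Raugel bubble: $\bbeta_i|_{\p K}=\bb_i|_{\p K}=B_i\bn_i$. Since $B_i=\prod_{k\ne i}\mu_k$ vanishes on the face $F_k$ for each $k\ne i$, we have $\bbeta_i|_{F_j}=0$ whenever $j\ne i$, while $\bbeta_i|_{F_i}\cdot\bn_i=B_i$. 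Therefore
\begin{equation*}
\int_{F_j}\bv\cdot\bn_j=\sum_{i=1}^{d+1} c_i\int_{F_j}\bbeta_i\cdot\bn_j=c_j\int_{F_j}B_j,
\end{equation*}
so the system for $(c_i)$ is diagonal with diagonal entries $\int_{F_j}B_j>0$.

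If all the functionals in \eqref{dofmbubbleLocal} vanish on $\bv$, the displayed identity gives $c_j\int_{F_j}B_j=0$, hence $c_j=0$ for every $j$, so $\bv=0$. This proves injectivity, which in turn yields unisolvence and the linear independence of the $\bbeta_i$.

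There is essentially no obstacle here, since the modified bubbles were designed to agree with the Bernardi--Raugel bubbles on $\p K$ and since the normal-component structure of the $\bb_i$ on the skeleton is exactly what makes the DOFs diagonal. The only point requiring care is to remember that the normal trace of $\bbeta_i$ on $F_j$ is determined entirely by the boundary identity in \eqref{eqn:bviDef}, so interior information about $\bbeta_i$ (which is not uniquely specified in $d\ge 3$) plays no role in the argument.
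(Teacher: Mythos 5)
Your proof is correct and follows essentially the same route as the paper: both arguments exploit that $\bbeta_i|_{\p K}=\bb_i|_{\p K}$ vanishes on $\p K\setminus F_i$, so the functionals \eqref{dofmbubbleLocal} act diagonally on the $\bbeta_i$ with positive diagonal entries $\int_{F_j}B_j$, giving injectivity and hence unisolvence. Your added remark that this simultaneously establishes the linear independence of the $\bbeta_i$ (and thus $\dim \MV(K)=d+1$) is a slight tidying of the paper's appeal to the dimension being ``clearly'' $d+1$.
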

\begin{proof}
Let $\bv\in \MV(K)$, 
and write $\bv = \sum_{i=1}^{d+1} a_i \bbeta_i$ for some $a_i\in \bbR$.
Suppose that $\bv$ vanishes on the degrees of freedom \eqref{dofmbubbleLocal}.
Then, since $\bbeta_i|_{\p K\backslash F_i} = \bb_i|_{\p K\backslash F_i} = 0$, we have
\begin{align*}
0 = \int_{F_i} \bv\cdot \bn_i = a_i \int_{F_i} \bb_i \cdot \bn_i = a_i \int_{F_i} B_i.
\end{align*}
Since $B_i>0$ on $F_i$, we conclude that $a_i=0$ and $\bv\equiv 0$.
The dimension of $\MV$ is clearly $(d+1)$, and therefore
we conclude that \eqref{dofmbubbleLocal} uniquely determines
a function in $\MV$.
%
%
\end{proof}
We  define the global space:
\begin{equation*}
\MV_h:=\{ \bv \in \bld{H}_0^1(\Omega): \bv|_K \in \MV(K), \text{ for all } K \in \mct \}.
\end{equation*}
Lemma \ref{lem:MVDOFs} shows that the
degrees of freedom for $\bv \in \bV_h^\text{MF}(K)$ are
\begin{equation}\label{dofmbubble}
 \int_{F} \bv\cdot \bn\quad  \text{  for all interior $(d-1)$-dimensional faces } F \text{ of } \mct. 
\end{equation}
%
\begin{theorem}\label{modifiedbubble}
It holds, $\dive \bV_h^\text{MF} \subset \mathring{\pol}_0(\mct)$. Moreover, 
for any $p \in \mathring{\pol}_0(\mct)$  there exists a  $\bv \in \bV_h^\text{MF}$ so that 
\begin{equation}\label{vp}
\dive \bv=p \qquad \text{ in } \Omega, 
\end{equation}
with the bound
\begin{equation*}
\|\bv\|_{H^1(\Omega)} \le C \| p \|_{L^2(\Omega)}.  
\end{equation*}
\end{theorem}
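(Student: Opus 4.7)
The plan is to first verify the inclusion $\dive \bV_h^\text{MF} \subset \mathring{\pol}_0(\mct)$, and then construct a preimage of any $p\in \mathring{\pol}_0(\mct)$ by transplanting a Bernardi--Raugel function via the natural map $\bb_i\mapsto \bbeta_i$.

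For the inclusion: take $\bv\in \bV_h^\text{MF}$. On each $K\in \mct$, $\bv|_K=\sum_{i=1}^{d+1} a_i\bbeta_i$ and so, by \eqref{eqn:bviDef}, $\dive \bv|_K \in \pol_0(K)$. Thus $\dive \bv\in \pol_0(\mct)$. Since $\bv\in \bld{H}^1_0(\Omega)$, integration by parts yields $\int_\Omega \dive \bv = 0$, so $\dive \bv\in \mathring{\pol}_0(\mct)$.

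For surjectivity, given $p\in \mathring{\pol}_0(\mct)$, Proposition \ref{bernardiraugel} provides $\bw\in \bV_h^\text{BR}$ with $\int_K \dive \bw = \int_K p$ for each $K\in \mct$ and $\|\bw\|_{H^1(\Omega)}\le C\|p\|_{L^2(\Omega)}$. On each $K$, write $\bw|_K=\sum_{i=1}^{d+1} a_{K,i}\bb_{K,i}$ and define $\bv$ by setting $\bv|_K := \sum_{i=1}^{d+1} a_{K,i}\bbeta_{K,i}\in \MV(K)$. The key observation is that $\bbeta_{K,i}|_{\p K}=\bb_{K,i}|_{\p K}$ (Proposition \ref{prop:Bubbles}), so $\bv$ and $\bw$ share the same trace on each $\p K$; in particular $\bv$ is continuous across interelement faces and vanishes on $\p\Omega$, giving $\bv\in \MV_h$.

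It then remains to check the divergence identity and the bound. Since $\dive \bv|_K$ is constant,
\begin{equation*}
|K|\,\dive \bv|_K = \int_K \dive \bv = \int_{\p K}\bv\cdot \bn = \int_{\p K}\bw\cdot \bn = \int_K \dive \bw = \int_K p = |K|\, p|_K,
\end{equation*}
so $\dive \bv = p$ pointwise on $\Omega$. For stability, the stability estimate in Proposition \ref{prop:Bubbles} together with the (scaling-invariant) equivalence of the $H^1(K)$-norm with $(\sum_i a_{K,i}^2 \|\bb_{K,i}\|_{H^1(K)}^2)^{1/2}$ on $\bV^\text{BR}(K)$ yields $\|\bv\|_{H^1(K)}\le C\|\bw\|_{H^1(K)}$; summing over $K\in \mct$ gives $\|\bv\|_{H^1(\Omega)}\le C\|\bw\|_{H^1(\Omega)}\le C\|p\|_{L^2(\Omega)}$. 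No step presents a serious obstacle: the entire argument hinges on the trace-matching property in \eqref{eqn:bviDef}, which reduces the problem on $\mct^r$ to the classical Bernardi--Raugel inf-sup estimate on $\mct$.
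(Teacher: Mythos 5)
Your proof is correct, and the core mechanism is the same as the paper's (the divergence theorem plus piecewise constancy of $\dive\bv$ turns a matching of boundary fluxes into the pointwise identity $\dive\bv=p$), but you reach the flux-matching step by a genuinely different route. The paper takes $\bw\in\bld{H}^1_0(\Omega)$ to be a \emph{continuous} right inverse of the divergence applied to $p$ (citing Dur\'an's Bogovskii-type result), and then defines $\bv\in\MV_h$ by prescribing the degrees of freedom $\int_F\bv\cdot\bn=\int_F\bw\cdot\bn$; the $H^1$ bound then requires the stability of this DOF interpolation for arbitrary $H^1_0$ fields, which the paper dispatches with ``a simple scaling argument'' (implicitly a trace inequality on faces). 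You instead invoke the discrete Proposition \ref{bernardiraugel} to get $\bw\in\bV_h^{\text{BR}}$ and transplant it coefficient-by-coefficient via $\bb_i\mapsto\bbeta_i$; because $\bbeta_i|_{\p K}=\bb_i|_{\p K}$, your $\bv$ in fact has the same normal-flux DOFs as the paper's would if fed your $\bw$, so the two constructions are close cousins. What your version buys is that the stability estimate reduces to a norm equivalence on the $(d+1)$-dimensional space $\bV^{\text{BR}}(K)$ (uniform by shape regularity, since the $\bb_i$ are linearly independent — restrict to each face $F_j$ to see this), avoiding any interpolation estimate for general $H^1$ functions; the cost is that you must rely on Proposition \ref{bernardiraugel}, which the paper leaves unproved as ``well known,'' whereas the paper's route only needs the continuous inf-sup condition. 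Both arguments are complete at the level of rigor the paper itself adopts.
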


\begin{proof}
First, by \eqref{eqn:bviDef} we see that $\dive \MV_h \subset \mathring{\pol}_0(\mct)$. 

For a fixed $p \in  \mathring{\pol}_0(\mct)$, there exists $\bw \in \bld{H}_0^1(\Omega)$ such that (see for example \cite{Duran}).  
\begin{equation*}
\dive \bw=p\quad  \text{ in } \Omega, 
\end{equation*}
with the bound
\begin{equation*}
\|\bw\|_{H^1(\Omega)} \le C \| p \|_{L^2(\Omega)}.  
\end{equation*}
We then define $\bv \in \bV_h^\text{MF}$ by
\begin{equation*}
\int_F \bv \cdot \bn=\int_F \bw \cdot \bn \quad  \text{  for all interior $(d-1)$-dimensional faces } F \text{ of } \mct. 
\end{equation*}
A simple scaling argument gets 
\begin{equation*}
\| \bv\|_{H^1(\Omega)} \le C \|\bw\|_{H^1(\Omega)} \le  \| p \|_{L^2(\Omega)}.
\end{equation*}
Moreover, an application of the divergence theorem shows
\begin{equation*}
\int_K \dive \bv=\int_K \dive \bw=\int_K p \text{ for all } K \in \mct.
\end{equation*}
Since $\dive \bv, p  \in \mathring{\pol}_0(\mct)$, this proves \eqref{vp}. 
\end{proof}

\section{A low  order inf-sup stable pair on $\mct$}\label{sec:LowOrderNonRefine}
With the modified face bubble spaces,
we now provide several inf-sup stable and divergence-free pairs
applicable to incompressible flows. First, let
us recall the definitions of an inf-sup stable pair
and a divergence-free pair.
%
\begin{definition}\label{def:InfSup}
A pair of spaces $\bV_h-M_h$, with $\bV_h \subset \bld{H}_0^1(\Omega)$ and  $M_h \in L_0^2(\Omega)$ is {\em inf-sup stable} if there exists a constant $\gamma >0$ such that 
\begin{equation*}
 0< \gamma \le  \inf_{ 0 \neq q \in M_h}  \sup_{0 \neq \bv \in \bV_h} \frac{\int_{\Omega} q \, \dive \bv }{ \|\bv\|_{H^1(\Omega)} \|p\|_{L^2(\Omega)} }.
\end{equation*}
The pair is said to be a {\em divergence-free pair} if
$\dive \bV_h \subset M_h$.
\end{definition}

In this section, we give 
an example of a pair defined on the macro mesh $\mct$ satisfying
the two conditions in Definition \ref{def:InfSup}.
Note that Theorem \ref{modifiedbubble} shows 
that  $\bV_h^{\text{MF}}- \mathring{\pol}(\mct)$ is such a pair, i.e., 
it is a inf-sup stable and divergence-free pair.
However, $\bV_h^{\text{MF}}$ does not have good approximation properties;
thus, 
we supplement the bubble space with $\mathring{\bpol}_1^c(\mct)$. The following corollary 
to Theorem \ref{modifiedbubble} is immediate. 

\begin{corollary}\label{cor:LowOrderPairNonR}
The pair $\mathring{\bpol}_1^c(\mct)+ \MV_h- \mathring{\pol}_0(\mct)$ is a divergence free, inf-sup stable pair.  
\end{corollary}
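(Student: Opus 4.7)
The plan is to observe that this corollary is essentially a direct consequence of Theorem \ref{modifiedbubble}, enlarged velocity space only helping. I would structure the proof in two short steps corresponding to the two conditions in Definition \ref{def:InfSup}.

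First I would verify the divergence-free property $\dive\bigl(\mathring{\bpol}_1^c(\mct)+\MV_h\bigr)\subset \mathring{\pol}_0(\mct)$. For any $\bv\in \mathring{\bpol}_1^c(\mct)$, its divergence is piecewise constant with respect to $\mct$, and since $\bv\in \bld{H}_0^1(\Omega)$ we have $\int_\Omega \dive \bv = 0$, so $\dive \bv \in \mathring{\pol}_0(\mct)$. For the bubble part, the containment $\dive \MV_h \subset \mathring{\pol}_0(\mct)$ was proved in Theorem \ref{modifiedbubble}. Linearity then gives the inclusion.

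Next I would establish inf-sup stability. Given $p\in \mathring{\pol}_0(\mct)$, Theorem \ref{modifiedbubble} produces $\bv\in \MV_h$ with $\dive \bv = p$ on $\Omega$ and $\|\bv\|_{H^1(\Omega)}\le C\|p\|_{L^2(\Omega)}$. Since $\MV_h\subset \mathring{\bpol}_1^c(\mct)+\MV_h$, this $\bv$ is admissible as a test function for the larger space, and
\begin{equation*}
\sup_{0\ne \bw\in \mathring{\bpol}_1^c(\mct)+\MV_h}\frac{\int_\Omega p\,\dive \bw}{\|\bw\|_{H^1(\Omega)}}\ge \frac{\int_\Omega p\,\dive \bv}{\|\bv\|_{H^1(\Omega)}}=\frac{\|p\|_{L^2(\Omega)}^2}{\|\bv\|_{H^1(\Omega)}}\ge \frac{1}{C}\|p\|_{L^2(\Omega)}.
\end{equation*}
Dividing by $\|p\|_{L^2(\Omega)}$ and taking the infimum over $p$ yields the inf-sup condition with $\gamma = 1/C$.

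There is no real obstacle here — the entire work has been done in Theorem \ref{modifiedbubble}, which in turn rests on the local inf-sup result (Theorem \ref{localthm}) through the modified bubble construction (Proposition \ref{prop:Bubbles}). The only conceptual point worth stating explicitly is that adding $\mathring{\bpol}_1^c(\mct)$ to the velocity space cannot destroy either property: it preserves the divergence containment since $\dive \mathring{\bpol}_1^c(\mct)\subset \mathring{\pol}_0(\mct)$, and it can only enlarge the supremum in the inf-sup quotient.
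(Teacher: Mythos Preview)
Your argument is correct and matches the paper's reasoning exactly: the paper states the corollary as ``immediate'' from Theorem~\ref{modifiedbubble}, and your two-step verification (divergence containment plus inf-sup inherited from the smaller space $\MV_h$) is precisely the content of that immediacy. There is nothing to add.
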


\begin{remark}
The two-dimensional case of Corollary \ref{cor:LowOrderPairNonR}, although not written in this exact form, has appeared in \cite{ChristiansenHu}.
\end{remark}

\begin{remark}
It follows from Lemma \ref{lem:MVDOFs} and properties of the modified 
bubble functions that the degrees of freedom for $\bv\in \mathring{\bpol}_1^c(\mct)+ \MV_h$
are the same as the Bernardi-Raugel finite element space \cite{BernardiRaugel}, that is, the degrees of freedom are
\eqref{dofmbubble} and  function evaluation at all interior vertices of $\mct$; 
see Figure \ref{fig:DOFs3}.
\end{remark}

\begin{figure}
\begin{center}
\begin{tikzpicture}[scale=0.7]

\drawtriangle

\foreach \i in {1,2,3}
{	
	\draw[-, opacity=.5,dashed]  (C\i)--(B);
	\drawnode{C\i}
	\draw[thick,->](E\i)--($(E\i)!0.2!90:(C\i)$); 
}

\end{tikzpicture}
\tdplotsetmaincoords{70}{130}
\begin{tikzpicture}[tdplot_main_coords, scale=2.1]

\drawtetAlt

\foreach \i in {1,2,3,4}
{	
	\drawnode{C\i};
	\draw[-,dashed, opacity=.5]  (C\i)--(B);
}

\end{tikzpicture}
\caption{Degrees of freedom for
the velocity space in
Corollary \ref{cor:LowOrderPairNonR}
in two dimensions (left) and three dimensions (right).
Solid circles indicate function evaluation and arrows
indicate normal component evaluation.
}\label{fig:DOFs3}
\end{center}
\end{figure}
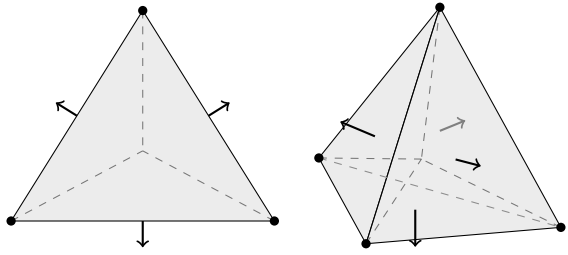

\section{Inf-sup stable pair of spaces on $\mct^r$}\label{sectionrefined}

In this section, we apply Theorem \ref{modifiedbubble}
to construct divergence-free and inf-sup stable pairs
on the refined mesh $\mct^r$. 
To do so, 
we will use the following result repeatedly. 
\begin{proposition}\label{lem:LowerOrderFramework}
Let $k\ge 1$, and suppose that $\bV_h\subset \bH^1_0(\Omega)$
satisfies $\mathring{\bpol}_k(\mct^r)\subset \bV_h$
and $\bV_h-\mathring{\pol}_0(\mct)$ is inf-sup stable.
Then $\bV_h-\mathring{\pol}_{k-1}(\mct^r)$ is inf-sup stable.
\end{proposition}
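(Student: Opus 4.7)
The plan is to use a standard two-scale / Boland–Nicolaides splitting of the pressure into a macro piecewise-constant part and a macro mean-zero remainder, and then handle each part with a different tool: the hypothesized macro inf-sup for the constant part, and the local inf-sup (Theorem \ref{localthm}) for the remainder.

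First I would split any $p\in \mathring{\pol}_{k-1}(\mct^r)$ as $p = p_0+p_1$, where $p_0\in \mathring{\pol}_0(\mct)$ is the $L^2$-projection onto macro piecewise constants defined by $p_0|_K = |K|^{-1}\int_K p$, and $p_1 := p-p_0$. By construction $p_1|_K\in \mathring{\pol}_{k-1}(K^r)$ for every $K\in \mct$, and the two pieces are $L^2$-orthogonal, giving $\|p_0\|_{L^2(\Omega)}^2+\|p_1\|_{L^2(\Omega)}^2 = \|p\|_{L^2(\Omega)}^2$. Next I would apply Theorem \ref{localthm} on each $K\in\mct$ to the pressure $p_1|_K$ to obtain $\bv_K\in \mathring{\bpol}_k^c(K^r)$ with $\dive \bv_K = p_1|_K$ and $\|\bv_K\|_{H^1(K)}\le C\|p_1\|_{L^2(K)}$. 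Because $\bv_K$ vanishes on $\p K$, the function $\bv_2$ obtained by gluing the $\bv_K$ lies in $\mathring{\bpol}_k^c(\mct^r)\subset \bV_h$, satisfies $\dive \bv_2 = p_1$ on $\Omega$, and $\|\bv_2\|_{H^1(\Omega)}\le C\|p_1\|_{L^2(\Omega)}$.

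For the constant part, the assumed inf-sup stability of $\bV_h-\mathring{\pol}_0(\mct)$ yields $\bv_1\in \bV_h$ with $\int_\Omega p_0\dive \bv_1 \ge \gamma_0\|p_0\|_{L^2(\Omega)}^2$ and $\|\bv_1\|_{H^1(\Omega)}\le \|p_0\|_{L^2(\Omega)}$ (after rescaling). I would then test with $\bv := \bv_2+\delta\bv_1$ for a small parameter $\delta>0$ to be chosen. Using orthogonality $\int_\Omega p_0 p_1=0$ and the identity $\dive \bv_2 = p_1$, one computes
\begin{equation*}
\int_\Omega p\,\dive \bv \;=\; \|p_1\|_{L^2(\Omega)}^2 \;+\; \delta\int_\Omega p_0\dive \bv_1 \;+\; \delta\int_\Omega p_1\dive \bv_1.
\end{equation*}
The first two terms are nonnegative and bounded below by $\|p_1\|_{L^2}^2 + \delta\gamma_0\|p_0\|_{L^2}^2$; the last term is controlled by $\delta\|p_1\|_{L^2}\|p_0\|_{L^2}$, which is absorbed via Young's inequality by giving up half of each favorable term. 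Choosing $\delta$ small enough (depending only on $\gamma_0$) yields $\int_\Omega p\,\dive\bv \gtrsim \|p\|_{L^2(\Omega)}^2$, while the triangle inequality gives $\|\bv\|_{H^1(\Omega)} \lesssim \|p\|_{L^2(\Omega)}$, establishing the desired inf-sup bound.

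The only subtle point I anticipate is bookkeeping the combination step—verifying that $\bv_2$ truly lies in $\bV_h$ (which is where the hypothesis $\mathring{\bpol}_k^c(\mct^r)\subset \bV_h$ is used in an essential way) and choosing $\delta$ cleanly so the constants depend only on $\gamma_0$ and the constant from Theorem \ref{localthm}. Everything else is routine manipulation with the Boland–Nicolaides splitting.
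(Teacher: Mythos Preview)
Your proof is correct and follows essentially the same approach as the paper: both split the pressure into its macro piecewise-constant projection and the mean-zero remainder, handle the remainder via Theorem~\ref{localthm} (using the inclusion $\mathring{\bpol}_k^c(\mct^r)\subset \bV_h$), and control the constant part via the assumed macro inf-sup. The only cosmetic difference is in the final assembly: the paper bounds $\|q-\bar q\|$ and $\|\bar q\|$ separately against the supremum and combines by the triangle inequality, whereas you build an explicit test function $\bv_2+\delta\bv_1$ and tune $\delta$ via Young's inequality---two standard, equivalent ways to finish the Boland--Nicolaides argument.
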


\begin{proof}
Let $q \in \mathring{\pol}_{k-1}(\mct^r)$, and define   $\bar{q}$ to be its $L^2$-projection onto $\mathring{\pol}_0(\mct)$, i.e.,
\begin{align*}
\bar{q}|_K = \int_K q\, dx\qquad \forall K\in \mct.
\end{align*}

By Theorem {\ref{localthm}} there exists $\bw\in \mathring{\bpol}^c_k(\mct^r) \subset \bV_h$ so that 
\begin{equation*}
\dive w= q-\bar{q} \quad \text{ on } \Omega,
\end{equation*}
\begin{equation}\label{wC}
\|w\|_{H^1(\Omega)} \le  C \|q-\bar{q}\|_{L^2(\Omega)}.
\end{equation}
Hence, 
\begin{equation*}
\|q-\bar{q}\|_{L^2(\Omega)} = \int_{\Omega} (q-\bar{q}) (q-\bar{q})= \int_{\Omega} q (q-\bar{q}) =\int_{\Omega} q\, \dive \bw.
\end{equation*}

Applying \eqref{wC}, we find
\begin{equation*}
\|q-\bar{q}\|_{L^2(\Omega)}^2 = \|w\|_{H^1(\Omega)}  \frac{ \int_{\Omega} q\, \dive \bw}{\|w\|_{H^1(\Omega)}} \le   C \|q-\bar{q}\|_{L^2(\Omega)} \sup_{0 \neq \bv \in \bV_h} \frac{\int_{\Omega} q \, \dive \bv }{ \|\bv\|_{H^1(\Omega)}},
\end{equation*}
and therefore, 
\begin{equation*}
\|q-\bar{q}\|_{L^2(\Omega)} \le   C \sup_{0 \neq \bv \in \bV_h} \frac{\int_{\Omega} q \, \dive \bv }{ \|\bv\|_{H^1(\Omega)}}.
\end{equation*}

By {the} hypothesis, there is a constant $\gamma_0 >0$ such that 
\begin{equation*}
 \gamma_0 \|\bar{q}\|_{L^2(\Omega)}  \le  \sup_{0 \neq \bv \in \bV_h} \frac{\int_{\Omega} \bar{q} \, \dive \bv }{ \|\bv\|_{H^1(\Omega)}} \le \sup_{0 \neq \bv \in \bV_h} \frac{\int_{\Omega} q \, \dive \bv }{ \|\bv\|_{H^1(\Omega)}}+ \|q-\bar{q}\|_{L^2(\Omega)} .
\end{equation*}
Hence, using the last two inequalities we get
\begin{equation*}
\|q\|_{L^2(\Omega)} \le (\|q-\bar{q}\|_{L^2(\Omega)} + \|\bar{q}\|_{L^2(\Omega)})  \le (C(\frac{1}{\gamma_0}+1)+  \frac{1}{\gamma_0})  \sup_{0 \neq \bv \in \bV_h} \frac{\int_{\Omega} {q} \, \dive \bv }{ \|\bv\|_{H^1(\Omega)}}.
\end{equation*}
This proves the result. 
\end{proof}

\subsection{Higher-order Elements with discontinuous pressures}
Using Proposition \ref{lem:LowerOrderFramework}, we now show that the pair  
$\mathring{\bpol}_k^c(\mct^r)-\mathring{\pol}_{k-1}(\mct^r)$ for $k \ge d$ is inf-sup stable. 
\begin{corollary}
The pair $\mathring{\bpol}_k^c(\mct^r)-\mathring{\pol}_{k-1}(\mct^r)$ with $k\ge d$ is a divergence-free, inf-sup stable pair.
\end{corollary}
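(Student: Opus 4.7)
The plan is to combine the ingredients already developed in the paper: the global inf-sup stability of the Bernardi--Raugel bubbles against piecewise constants (Proposition \ref{bernardiraugel}) gives an ``outer'' mean-value inf-sup, while the local result (Theorem \ref{localthm}) absorbed into Proposition \ref{lem:LowerOrderFramework} upgrades this to the full refined-mesh pressure space. No new construction is required.

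First I would verify the divergence-free (containment) property. If $\bv \in \mathring{\bpol}_k^c(\mct^r)$, then $\dive \bv$ is piecewise polynomial of degree $k-1$ on $\mct^r$ and, by the divergence theorem together with the zero boundary trace of $\bv$, has zero mean on $\Omega$. Hence $\dive \bv \in \mathring{\pol}_{k-1}(\mct^r)$.

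Next I would verify the hypotheses of Proposition \ref{lem:LowerOrderFramework} with $\bV_h = \mathring{\bpol}_k^c(\mct^r)$. The containment $\mathring{\bpol}_k^c(\mct^r)\subset \bV_h$ is tautological. For the auxiliary inf-sup against $\mathring{\pol}_0(\mct)$, I would use that the Bernardi--Raugel bubbles $\bb_i$ lie in $\bpol_d(K)$, so for $k\ge d$ we have $\bV_h^\text{BR} \subset \mathring{\bpol}_k^c(\mct)\subset \mathring{\bpol}_k^c(\mct^r)$ (every piecewise polynomial on the macro mesh is trivially piecewise polynomial on its refinement). Then Proposition \ref{bernardiraugel} produces, for any $p\in \mathring{\pol}_0(\mct)$, a $\bv\in \bV_h^\text{BR}\subset \mathring{\bpol}_k^c(\mct^r)$ with $\int_K \dive \bv = \int_K p$ for every $K\in \mct$ and $\|\bv\|_{H^1(\Omega)}\le C\|p\|_{L^2(\Omega)}$. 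Since $p$ is constant on each $K$, one computes $\int_\Omega p\,\dive \bv = \sum_K p|_K \int_K \dive \bv = \|p\|_{L^2(\Omega)}^2$, which immediately yields the desired inf-sup constant for the pair $\mathring{\bpol}_k^c(\mct^r)-\mathring{\pol}_0(\mct)$.

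Finally, invoking Proposition \ref{lem:LowerOrderFramework} with this pair promotes the inf-sup stability to $\mathring{\bpol}_k^c(\mct^r)-\mathring{\pol}_{k-1}(\mct^r)$, completing the proof. There is no real obstacle here; the only subtlety worth flagging is the mild abuse that Proposition \ref{lem:LowerOrderFramework} is stated with the hypothesis ``$\mathring{\bpol}_k(\mct^r)\subset \bV_h$'' while we apply it with $\bV_h=\mathring{\bpol}_k^c(\mct^r)$ itself, which is clearly the intended reading since the proposition's proof only uses Theorem \ref{localthm} to produce an $H^1_0$-conforming corrector in $\mathring{\bpol}_k^c(\mct^r)$.
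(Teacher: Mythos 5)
Your proposal is correct and follows essentially the same route as the paper: verify $\bV_h^{\text{BR}} \subset \mathring{\bpol}_k^c(\mct^r)$ for $k\ge d$, use Proposition \ref{bernardiraugel} to get inf-sup stability against $\mathring{\pol}_0(\mct)$, and then invoke Proposition \ref{lem:LowerOrderFramework}. The extra details you supply (the mean-value computation giving the inf-sup constant, and the reading of the hypothesis of Proposition \ref{lem:LowerOrderFramework}) are accurate but only make explicit what the paper leaves implicit.
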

\begin{proof}
Since $\bV_h^{\text{BR}} \subset \mathring{\bpol}_k^c(\mct^r)$ for $k \ge d$ and the fact that $\bV_h^{\text{BR}}-\mathring{\pol}_{0}(\mct)$ is inf-sup stable
(cf.~Proposition \ref{bernardiraugel}), the corollary follows by applying Proposition \ref{lem:LowerOrderFramework}.
\end{proof}

In order to establish that  $\mathring{\bpol}_k^c(\mct^r)-\mathring{\pol}_{k-1}(\mct^r)$  is 
inf-sup stable we used that $\bV_h^{\text{BR}}-\mathring{\pol}_{0}(\mct)$ is inf-sup stable;
in other words, the inclusion $\bV_h^{\text{BR}} \subset \mathring{\bpol}_k^c(\mct)$ implies 
that $\mathring{\bpol}_k^c(\mct)-\mathring{\pol}_{0}(\mct)$  is inf-sup stable. An interesting fact is that the converse is true. 

\begin{theorem}
The pair $\mathring{\bpol}_k^c(\mct^r)-\mathring{\pol}_{k-1}(\mct^r)$ is inf-sup stable if and only if  
$\mathring{\bpol}_k^c(\mct)-\mathring{\pol}_{0}(\mct)$ is inf-sup stable. 
\end{theorem}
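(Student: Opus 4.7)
The approach is to treat the two directions asymmetrically. The $(\Leftarrow)$ direction is essentially a direct consequence of Proposition \ref{lem:LowerOrderFramework}, while the $(\Rightarrow)$ direction requires a nontrivial transfer from the refined mesh back to the macro mesh, which I expect to be the main work.

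For the $(\Leftarrow)$ direction, I would observe that $\mathring{\bpol}_k^c(\mct) \subset \mathring{\bpol}_k^c(\mct^r)$, so inf-sup stability of $\mathring{\bpol}_k^c(\mct)-\mathring{\pol}_0(\mct)$ trivially upgrades to inf-sup stability of $\mathring{\bpol}_k^c(\mct^r)-\mathring{\pol}_0(\mct)$ (the supremum taken over the larger velocity space only grows). Applying Proposition \ref{lem:LowerOrderFramework} with $\bV_h = \mathring{\bpol}_k^c(\mct^r)$ then yields inf-sup stability of $\mathring{\bpol}_k^c(\mct^r)-\mathring{\pol}_{k-1}(\mct^r)$.

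For the $(\Rightarrow)$ direction, the plan is as follows. Given $p \in \mathring{\pol}_0(\mct) \subset \mathring{\pol}_{k-1}(\mct^r)$, the assumed refined inf-sup stability (together with $\dive \mathring{\bpol}_k^c(\mct^r) \subset \mathring{\pol}_{k-1}(\mct^r)$) produces $\bw \in \mathring{\bpol}_k^c(\mct^r)$ with $\dive \bw = p$ and $\|\bw\|_{H^1(\Omega)} \le C\|p\|_{L^2(\Omega)}$. The key observation is that the refinement only adds an interior vertex $x_K$ to each macro simplex $K$, so the $(d-1)$-faces of $\mct^r$ contained in $\partial K$ coincide with the faces of $\mct$; consequently the trace $\bw|_{\partial K}$ is a genuine degree-$k$ polynomial on each macro face. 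For each $K\in \mct$, I would then define $\bv|_K \in \bpol_k(K)$ as the Lagrange interpolant of $\bw|_K$ onto $\bpol_k(K)$, so that $\bv|_K$ agrees with $\bw$ at every Lagrange node of $\bpol_k(K)$; in particular $\bv|_{\partial K} = \bw|_{\partial K}$. Continuity of $\bw$ across macro faces ensures that these local pieces assemble into $\bv \in \mathring{\bpol}_k^c(\mct)$.

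Two verifications remain: (i) $\int_K \dive \bv = \int_{\partial K}\bv\cdot \bn = \int_{\partial K}\bw\cdot\bn = \int_K p$ for every $K\in \mct$, so pairing with the piecewise constant $p$ gives $\int_\Omega p\,\dive \bv = \|p\|_{L^2(\Omega)}^2$; and (ii) the local stability bound $\|\bv\|_{H^1(K)} \le C\|\bw\|_{H^1(K)}$, which chained with the estimate on $\bw$ gives $\|\bv\|_{H^1(\Omega)} \le C'\|p\|_{L^2(\Omega)}$. I expect (ii) to be the main (albeit mild) obstacle, but a reference-element argument should suffice: the Lagrange interpolation operator $\bpol_k^c(\hat K^r) \to \bpol_k(\hat K)$ is a bounded linear map between finite-dimensional spaces, and a standard affine scaling transfers the bound to $K$ with a constant depending only on $\CK$ and $k$. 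Combining (i) and (ii) yields the desired inf-sup estimate for the macro pair, completing the proof.
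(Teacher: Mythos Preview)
Your proposal is correct and follows essentially the same approach as the paper. The $(\Leftarrow)$ direction is identical, and for $(\Rightarrow)$ both arguments hinge on the same key observation: the canonical Lagrange interpolant $\bI_h:\mathring{\bpol}_k^c(\mct^r)\to\mathring{\bpol}_k^c(\mct)$ preserves traces on $\partial K$ (since the refinement adds only interior vertices) and is $H^1$-stable by scaling; the only cosmetic difference is that the paper manipulates the supremum directly via $\int_\Omega q\,\dive\bv=\int_\Omega q\,\dive\bI_h\bv$, whereas you first extract a surjective right inverse $\bw$ with $\dive\bw=p$ before interpolating.
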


\begin{proof}
Assume $\mathring{\bpol}_k^c(\mct)-\mathring{\pol}_{0}(\mct)$ is inf-sup stable.  
Then by the inclusion $\mathring{\bpol}_k^c(\mct) \subset \mathring{\bpol}_k^c(\mct^r)$,
the pair $\mathring{\bpol}_k^c(\mct^r)-\mathring{\pol}_{0}(\mct)$ is inf-sup stable. 
Thus, $\mathring{\bpol}_k^c(\mct^r)-\mathring{\pol}_{k-1}(\mct^r)$ is inf-sup stable by {Proposition} \ref{lem:LowerOrderFramework}.

Now suppose that $\mathring{\bpol}_k^c(\mct^r)-\mathring{\pol}_{k-1}(\mct^r)$ is inf-sup stable. 
Let $q \in \mathring{\pol}_{0}(\mct)$.  Due to the inclusion
$\mathring{\pol}_0(\mct) \subset \mathring{\pol}_{k-1}(\mct^r)$,  there exist $\gamma>0$ such that
\begin{equation*}
\gamma \|q\|_{L^2(\Omega)} \le \sup_{0 \neq \bv \in \mathring{\bpol}_k^c(\mct^r)} \frac{\int_{\Omega} q \, \dive \bv }{ \|\bv\|_{H^1(\Omega)} }.
\end{equation*} 
Let $\bI_h:[C(\bar{\Omega})]^d \rightarrow \mathring{\bpol}_k^c(\mct)$
be the canonical (nodal) interpolant.  
We then have
\begin{equation}\label{partial}
(\bI_h \bv-\bv)|_{\partial K}=0 \quad \text{ for all } K\in \mct, \text{ for all } \bv \in \mathring{\bpol}_k^c(\mct^r).
\end{equation}
Moreover, 
\begin{equation}\label{bound223}
\|\bI_h \bv \|_{H^1(\Omega)} \le C \|\bv \|_{H^1(\Omega)} \quad \text{ for all } \bv \in \mathring{\bpol}_k^c(\mct^r).
\end{equation}

By  \eqref{partial} and the divergence theorem we get that 
\begin{equation*} 
\int_{\Omega} q \, \dive \bv =\int_{\Omega} q \, \dive \bI_h \bv\quad  \text{ for all }  \bv \in \mathring{\bpol}_k^c(\mct^r) .
\end{equation*}
Hence, by \eqref{bound223},
\begin{equation*}
\gamma \|q\|_{L^2(\Omega)} \le \sup_{0 \neq \bv \in \mathring{\bpol}_k^c(\mct^r)} \frac{\int_{\Omega} q \, \dive \bI_h \bv }{ \|\bv\|_{H^1(\Omega)} } \le 
C^{-1} \sup_{0 \neq \bv \in \mathring{\bpol}_k^c(\mct^r)} \frac{\int_{\Omega} q \, \dive \bI_h \bv }{ \|\bI_h\bv\|_{H^1(\Omega)} }
\le C^{-1}  \sup_{0 \neq \bw \in \mathring{\bpol}_k^c(\mct)} \frac{\int_{\Omega} q \, \dive \bw }{ \|\bw\|_{H^1(\Omega)}}.
\end{equation*}
Therefore, $\mathring{\bpol}_k^c(\mct)-\mathring{\pol}_{0}(\mct)$ is inf-sup stable.

\end{proof}

\subsection{Low-order elements with discontinuous pressures}
For $k <d$,
we can always augment $\mathring{\bpol}_k^c(\mct^r)$ with $\bV_h^{\text{BR}}$ and it will lead to an inf-sup stable pair. 
However, the resulting pair will not be a divergence-free pair.  Therefore, we instead supplement  $\mathring{\bpol}_k^c(\mct^r)$
with $\MV_h$.

\begin{corollary}\label{cor:LowOrder}
Let $1 \le k < d$. The pair $\mathring{\bpol}_k^c(\mct^r)+\bV_h^{\text{MF}}-\mathring{\pol}_{k-1}(\mct^r)$ is a divergence-free, inf-sup stable pair. 
\end{corollary}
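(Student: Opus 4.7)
The plan is to apply Proposition \ref{lem:LowerOrderFramework} with $\bV_h = \mathring{\bpol}_k^c(\mct^r) + \bV_h^{\text{MF}}$, so the bulk of the work reduces to verifying the two hypotheses of that proposition, plus checking the divergence-free inclusion.

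First I would check the inclusion $\mathring{\bpol}_k^c(\mct^r) \subset \bV_h$, which is immediate from the definition of $\bV_h$ as a sum. Next, I need to establish that $\bV_h - \mathring{\pol}_0(\mct)$ is inf-sup stable. For this, I would invoke Theorem \ref{modifiedbubble}, which directly provides that for every $p \in \mathring{\pol}_0(\mct)$ there exists $\bv \in \bV_h^{\text{MF}}$ with $\dive \bv = p$ and $\|\bv\|_{H^1(\Omega)} \le C \|p\|_{L^2(\Omega)}$. Since $\bV_h^{\text{MF}} \subset \bV_h$, the inf-sup stability of $\bV_h - \mathring{\pol}_0(\mct)$ follows immediately (the supremum over the larger space $\bV_h$ can only be larger). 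Having verified both hypotheses of Proposition \ref{lem:LowerOrderFramework}, I conclude that $\bV_h - \mathring{\pol}_{k-1}(\mct^r)$ is inf-sup stable.

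For the divergence-free property, I would verify the inclusion $\dive \bV_h \subset \mathring{\pol}_{k-1}(\mct^r)$ summand by summand. For $\bv \in \mathring{\bpol}_k^c(\mct^r)$, we have $\dive \bv \in \mathring{\pol}_{k-1}(\mct^r)$ trivially, since taking divergence of a piecewise polynomial of degree $k$ produces a piecewise polynomial of degree $k-1$ on the same partition, and integrating by parts against the constant $1$ gives zero because $\bv$ vanishes on $\p\Omega$. For $\bv \in \bV_h^{\text{MF}}$, Theorem \ref{modifiedbubble} gives $\dive \bv \in \mathring{\pol}_0(\mct)$, and the natural inclusion $\mathring{\pol}_0(\mct) \subset \mathring{\pol}_{k-1}(\mct^r)$ (since $k \ge 1$ and constants on macro elements are piecewise polynomials of degree $0 \le k-1$ on the refinement) completes the argument.

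There is no real obstacle here; the statement is essentially a bookkeeping combination of Theorem \ref{modifiedbubble} with the abstract framework of Proposition \ref{lem:LowerOrderFramework}. The only point that might warrant a brief comment is that the sum $\mathring{\bpol}_k^c(\mct^r) + \bV_h^{\text{MF}}$ is taken as a (not necessarily direct) sum inside $\bH^1_0(\Omega)$, so that both inclusions $\mathring{\bpol}_k^c(\mct^r) \subset \bV_h$ and $\bV_h^{\text{MF}} \subset \bV_h$ hold simultaneously, which is all that is needed.
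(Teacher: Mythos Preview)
Your proposal is correct and follows exactly the paper's approach: the paper's proof is a one-line appeal to Proposition~\ref{lem:LowerOrderFramework} and Theorem~\ref{modifiedbubble}, and you have simply unpacked those two ingredients in detail, additionally spelling out the divergence-free inclusion.
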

\begin{proof}
The result  follows from Proposition \ref{lem:LowerOrderFramework} and Lemma \ref{modifiedbubble}.
\end{proof} 
\begin{remark}
It follows from Proposition \ref{prop:Bubbles} and Lemma \ref{lem:MVDOFs}
that the degrees of freedom for  $\mathring{\bpol}_k^c(\mct^r)+\bV_h^{\text{MF}}$ ($k<d$) are the canonical degrees of freedom 
of $\mathring{\bpol}_k^c(\mct^r)$ plus the degrees of freedom \eqref{dofmbubble}; see Figure \ref{fig:DOFs1}.
\end{remark}

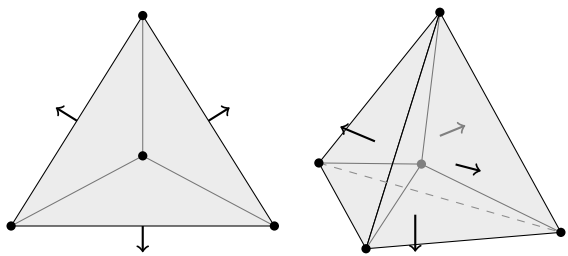
\begin{figure}
\begin{center}
\begin{tikzpicture}[scale=0.7]

\drawtriangle

\path let \p1 = (C1) in coordinate (b) at (2*\x1,\y1/2);

\foreach \i in {1,2,3}
{	
	\draw[-, opacity=.5]  (C\i)--(B); 
	\drawnode{C\i}; 
	\draw[thick,->](E\i)--($(E\i)!0.2!90:(C\i)$); 
}
\drawnode{B} 

\end{tikzpicture}
%
\tdplotsetmaincoords{70}{130}
\begin{tikzpicture}[tdplot_main_coords, scale=2.1]

\drawtetAlt

\foreach \i in {1,2,3,4}
{	
	\drawnode{C\i};
	\draw[-, opacity=.5]  (C\i)--(B);
}
\drawhiddennode{B}
\end{tikzpicture}

\caption{
Degrees of freedom for
the lowest order velocity space in
Corollary \ref{cor:LowOrder}
in two dimensions (left) and three dimensions (right).
Solid circles indicate function evaluation and arrows
indicate normal component evaluation.
}\label{fig:DOFs1}
\end{center}
\end{figure}

%

\subsection{Low-order Stokes pairs with continuous pressure}
In this section we, in some sense, generalize the inf-sup stable pair of spaces found  in \cite{AlfeldSorokina} to higher dimensions. 
In the paper \cite{AlfeldSorokina}, the pressure space is 
the space of continuous, piecewise linear polynomials with respect
to the refined triangulation:
%
\begin{equation*}
W_h^R= \mathring{\pol}_1(\mct^r)\cap H^1(\Omega).
\end{equation*}
Their velocity space is given by $\mathring{\bpol}_2^c(\mct^r) \cap \bH^1(\dive; \Omega)$, where
\begin{equation*}
\bH^1(\dive;S):= \{ \bv \in \bld{H}^1(S): \dive \bv \in \bld{H}^1(S)\}.
\end{equation*}
It is shown in \cite{AlfeldSorokina}, that this pair of spaces is inf-sup stable 
in two dimensions. It is clearly a divergence-free pair.

To generalize these results to higher dimensions it seems necessary to supplement the velocity space 
with the modified face bubbles given in Proposition \ref{prop:Bubbles}. The following result defines the local space
and a unisolvent set of degrees of freedom.
\begin{theorem}\label{thm:VRKrDOFsSmooth}
Define, for $d\ge 3$,
\begin{align*}
\bV_R(K):=\bpol_2(K^r)\cap \bH^1(\Div;K)\oplus {\rm span}\{\bbeta_i\}_{i=1}^{d+1}.
\end{align*}
Then a function $\bv\in \bV_R(K)$ is uniquely determined by the values
\begin{subequations}
\label{eqn:SmoothDOFs}
\begin{alignat}{2}
\label{eqn:SmoothDOFs1}
&\bv(x_i),\ \Div \bv(x_i)\quad &&\text{for all vertices } x_i \text{ of } K,\\
\label{eqn:SmoothDOFs2}
&\int_{e_i} \bv,\quad &&\text{for all one-dimensional edges $e_i$},\\
\label{eqn:SmoothDOFs3}
&\int_{F_i} \bv\cdot \bn_i,\quad &&\text{for all $(d-1)$-dimensional faces $F_i$}.
\end{alignat}
\end{subequations}
\end{theorem}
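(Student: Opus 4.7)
My plan is to prove unisolvence by verifying that the number of functionals in \eqref{eqn:SmoothDOFs} matches $\dim \bV_R(K)$ and then establishing injectivity of the degree-of-freedom map. Counting, the functionals number $(d+1)^2 + d\binom{d+1}{2}+(d+1)$; since the $\bbeta_i$ have degree $d\ge 3$ they do not lie in $\bpol_2(K^r)$, so the decomposition in the definition of $\bV_R(K)$ is a genuine direct sum, and the $(d+1)$ face-flux functionals correspond to the bubble component while the first two groups match $\dim[\bpol_2(K^r)\cap \bH^1(\Div;K)]$. The main content is the injectivity, which I describe below.

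Suppose $\bv \in \bV_R(K)$ annihilates every functional in \eqref{eqn:SmoothDOFs}, and write $\bv = \bw + \sum_{i=1}^{d+1} c_i \bbeta_i$ with $\bw \in \bpol_2(K^r)\cap \bH^1(\Div;K)$. I would first show that $\bw$ vanishes on $\partial K$. The key observation, valid precisely because $d \ge 3$, is that each bubble $\bbeta_l$ vanishes on every edge of $K$: on $\partial K$ we have $\bbeta_l = B_l \bn_l$ with $B_l = \prod_{k\neq l}\mu_k$, and on an edge connecting $x_i$ and $x_j$ the coordinate $\mu_k$ vanishes whenever $k\notin\{i,j\}$, so the $d$-fold product contains a zero factor. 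Consequently $\bv|_e=\bw|_e$ on each edge $e$, and this trace is a $\pol_2$ polynomial in each component; the vanishing vertex values in \eqref{eqn:SmoothDOFs1} and the vanishing edge integrals in \eqref{eqn:SmoothDOFs2} force $\bw|_e=0$. Since $\bw|_{F_j}$ is a $\bpol_2$ polynomial on the $(d-1)$-simplex $F_j$ whose Lagrange nodes (vertices and edge midpoints) all lie on edges of $F_j$, we conclude $\bw|_{\partial K}=0$.

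Next, on each face $F_j$ we have $\bbeta_i|_{F_j}=0$ for $i\ne j$ because $B_i$ contains $\mu_j$ as a factor; hence $\bv|_{F_j} = c_j B_j \bn_j$, and the vanishing of \eqref{eqn:SmoothDOFs3} yields $c_j \int_{F_j} B_j = 0$. Since $B_j>0$ in the interior of $F_j$, this forces $c_j=0$, so $\bv = \bw \in \mathring{\bpol}_2^c(K^r)\cap \bH^1(\Div;K)$. Because each $\Div \bbeta_i$ is constant, the divergence part of \eqref{eqn:SmoothDOFs1} reduces to $\Div \bw(x_i)=0$ for $i=1,\ldots,d+1$. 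To pin down the remaining nodal value $\Div \bw(x_0)$, I would use that $\bw \in \bH^1_0(K)$ implies $\int_K \Div \bw = 0$, together with the $P_1$ exact quadrature $\int_{K_i}\Div \bw = \frac{|K_i|}{d+1}\sum_{\text{vert}}\Div \bw$. Since the only possibly nonzero vertex value of $\Div \bw$ on any $K_i$ is at $x_0$, summing over $i$ gives $\frac{|K|}{d+1}\Div \bw(x_0)=0$, so $\Div \bw(x_0)=0$.

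Thus $\Div \bw$ is a $C^0$ piecewise $P_1$ function on $K^r$ vanishing at every vertex of $K^r$, and hence $\Div \bw\equiv 0$. Finally, Theorem \ref{localthm} with $k=2$ shows that $\Div:\mathring{\bpol}_2^c(K^r)\to \mathring{\pol}_1(K^r)$ is surjective, and a direct count gives $\dim \mathring{\bpol}_2^c(K^r)=d(d+2)=(d+1)^2-1=\dim \mathring{\pol}_1(K^r)$; the map is therefore bijective, so $\bw=0$ and consequently $\bv=0$. The main obstacle is the boundary step: one must simultaneously exploit the vanishing of $\bbeta_l$ on edges of $K$ (which is exactly where the hypothesis $d\ge 3$ enters) and the compatibility between the vertex and edge DOFs and the $\bpol_2$ Lagrange structure of the faces. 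Once $\bw|_{\partial K}=0$ is established, the remainder is a clean linear-algebra consequence of Theorem \ref{localthm} combined with the zero-mean identity for $\Div \bw$.
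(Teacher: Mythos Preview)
Your proof is correct and takes a genuinely different route from the paper's in the final step. Both arguments reach the point $\bv=\bw\in\mathring{\bpol}_2^c(K^r)$ with $\Div\bw(x_i)=0$ at the boundary vertices via the same boundary analysis (the edge vanishing of $\bbeta_l$ for $d\ge 3$). From there, the paper factors $\bw=\lambda_0\bp$ with $\bp\in\bpol_1^c(K^r)$, uses $\Div\bw|_{\partial K}=\nabla\lambda_0\cdot\bp$ and $\nabla\lambda_0\parallel\bn$ to force $\bp\cdot\bn|_{\partial K}=0$, hence $\bp|_{\partial K}=0$ and $\bw=\bc\lambda_0^2$, and finishes by observing that $\Div(\bc\lambda_0^2)$ is continuous only for $\bc=0$. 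You instead use the zero mean of $\Div\bw$ and the vertex quadrature rule to obtain $\Div\bw(x_0)=0$, hence $\Div\bw\equiv 0$, and then invoke Theorem~\ref{localthm} together with the equality $\dim\mathring{\bpol}_2^c(K^r)=\dim\mathring{\pol}_1(K^r)=d(d+2)$ to conclude that the divergence on $\mathring{\bpol}_2^c(K^r)$ has trivial kernel. Your route is more conceptual, reusing the paper's central local inf-sup result rather than an ad~hoc factorization; the paper's route is self-contained and feeds directly into the explicit basis $\bpol_2(K)+\mathrm{span}\{\bpsi_i\}$ constructed in the subsequent section.

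One small point: your opening dimension argument asserts that the first two groups of functionals match $\dim[\bpol_2(K^r)\cap\bH^1(\Div;K)]$ without justification. The paper handles this by recording only the lower bound $\dim\bV_R(K)\ge\frac12(d+1)(d^2+2d+4)$, obtained from $\dim\bpol_2^c(K^r)$ minus the $(d+1)^2-(d+2)$ continuity constraints on the piecewise linear divergence, plus the $d+1$ bubbles; combined with your injectivity this is all that unisolvence requires.
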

Before we prove this result, we note that in the case $d=2$ (which is not considered in this Theorem), 
the degrees of freedom \eqref{eqn:SmoothDOFs2} would contain the  degrees of freedom \eqref{eqn:SmoothDOFs3}. 
Therefore, in the case $d=2$, one simply has to eliminate the functions that give rise to  \eqref{eqn:SmoothDOFs3}, 
which are $\bbeta_1, \bbeta_2, \bbeta_3$; see \cite{AlfeldSorokina} for details.

\begin{myproof}[Theorem \ref{thm:VRKrDOFsSmooth}]
The constraint $\bv\in \bH^1(\Div;K)$ for $\bv\in \bpol^c_2(K^r)$
represents $(d+1)^2-(d+2)$ equations. Therefore
\begin{align*}
\dim \bV_R(K) 
&\ge \dim \bpol^c_2(K^r) - \big((d+1)^2 - (d+2)\big)+(d+1)\\
& = \frac12 (d+1)(d^2+2d+4).
\end{align*}
On the other hand, the number of degrees of freedom given is
\begin{align*}
d(d+1)+(d+1)+\frac{d^2}2 (d+1)+(d+1) = \frac12 (d+1)(d^2+2d+4).
\end{align*}

Now suppose that $\bv\in \bV_R(K)$ vanishes
on the degrees of freedom \eqref{eqn:SmoothDOFs}, and write $\bv = \bv_0+\bs$,
where $\bv_0\in \bpol_2^c(K^r)$
and $\bs = \sum_{i=1}^{d+1} c_i \bbeta_i$ for some $c_i\in \bbR$.
Then, since $\bs|_{F_i} = \bb_i$ on each $({d}-1)$-dimensional face,
we conclude  that
\begin{align*}
\bv_0(x_i) = 0,\qquad \int_{e_i} \bv_0 = 0
\end{align*}
for all vertices $x_i$ and edges $e_i$ of $K$.
These conditions imply that $\bv_0 = 0$ on $\p K$.
Therefore we have
\begin{align*}
0  = \int_{F_i} \bv\cdot \bn_i = \int_{F_i} \bs\cdot \bn_i = c_i \int_{F_i} \bb_i\cdot \bn_i.
\end{align*}
Since $\bb_i\cdot \bn_i>0$ on $F_i$, we obtain that $c_i=0$, and so $\bs\equiv 0$
and $\bv = \bv_0\in \bpol_2^c(K^r)$. Moreover, because $\Div \bv$ restricted
to a $({d}-1)$-dimensional face is a linear polynomial, we conclude
from the condition $\Div \bv(x_i)=0$ that $\Div \bv$ vanishes on $\p K$ as well.

Since $\bv$ vanishes on $\p K$ we can write  $\bv = \lambda_0 \bp$ (see Section \ref{sectionlocal} for definition of $\lambda_0$) for some $\bp\in \bpol^c_1(K^r)$.
We then find that 
\begin{align*}
0 = \Div \bv = \nab \lambda_0 \cdot \bp + \lambda_0\Div \bp = \nab \lambda_0 \cdot \bp\quad \text{on }\p K.
\end{align*}
The gradient of $\lambda_0$ restricted to $K_i$ is parallel to the outward unit normal
of the face $\p K_i\cap \p K$, and so we conclude that $\bp \cdot \bn=0$ on $\p K$.
This implies, since $\bp$ is continuous, that $\bp$ vanishes at the vertices of $K$.
But since $\bp$ is piecewise linear, we obtain that $\bp|_{\p K}=0$, and so $\bv = \bc \lambda_0^2$
for some $\bc \in \bbR^d$. However, it is easy to see that
$\Div \bv = 2\lambda_0 \bc \cdot \nab \lambda_0$ is only continuous if $\bc\equiv 0$.
Thus, $\bv \equiv 0$, and so the degrees of freedom are unisolvent on $\bV_R(K^r)$.
\end{myproof}

\begin{remark}
Note that $\Div \bbeta_i \in \pol_0(K) \subset \pol_1(K^r)\cap H^1(K)$.
Therefore $\bV_R(K^r)\subset \bH^1(\Div;K)$.
\end{remark}

\begin{remark}
If $\bv\in \bV_R(K^r)$ vanishes at the degrees of freedom
restricted to one face, then we can argue as in the proof of Theorem
 \ref{thm:VRKrDOFsSmooth} that $\bv =0$
and $\Div \bv=0$ on that face. Thus, the degrees
of freedom induce an $\bH^1(\Div;\Omega)$--conforming
finite element space.
\end{remark}

The local spaces and degrees of freedom lead
to the global finite element spaces
\begin{align*}
\bV_h^R = \{\bv\in  \bH^1_0(\Omega)\cap \bH^1(\Div;\Omega):\ \bv|_K\in \bV_R(K)\ \forall K\in \mct\}.
\end{align*}

\begin{theorem}\label{thm:VhRWhRStability}
The pair $\bV_h^R-W_h^R$
is inf-sup stable.
\end{theorem}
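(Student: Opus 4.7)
The plan is to apply Proposition \ref{lem:LowerOrderFramework} to the larger auxiliary velocity space $\tilde{\bV}_h := \mathring{\bpol}_2^c(\mct^r) + \MV_h$ and then to show that, whenever the pressure lies in the continuous subspace $W_h^R$, the resulting divergence-lift already belongs to the constrained space $\bV_h^R$. Since $W_h^R \subset \mathring{\pol}_1(\mct^r)$, this will transfer the inf-sup stability of the auxiliary pair to the pair at hand.

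The first step is immediate: $\mathring{\bpol}_2^c(\mct^r) \subset \tilde{\bV}_h$ by construction, and $\MV_h \subset \tilde{\bV}_h$ combined with Theorem \ref{modifiedbubble} yields inf-sup stability of $\tilde{\bV}_h - \mathring{\pol}_0(\mct)$. Applying Proposition \ref{lem:LowerOrderFramework} with $k=2$ then gives inf-sup stability of $\tilde{\bV}_h - \mathring{\pol}_1(\mct^r)$. By the standard equivalence between inf-sup stability and the existence of a bounded right inverse for $\Div$, for any $q \in W_h^R \subset \mathring{\pol}_1(\mct^r)$ there is a $\bv^* \in \tilde{\bV}_h$ with $\Div \bv^* = q$ and $\|\bv^*\|_{H^1(\Omega)} \le C \|q\|_{L^2(\Omega)}$.

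The crucial second step is to verify that this $\bv^*$ already lies in $\bV_h^R$. Tracing through the proof of Proposition \ref{lem:LowerOrderFramework}, the lifting decomposes as $\bv^* = \bw + \bu$, with $\bu \in \MV_h$ producing the macro-piecewise-constant mean $\bar q \in \mathring{\pol}_0(\mct)$ of $q$ (via Theorem \ref{modifiedbubble}) and $\bw \in \mathring{\bpol}_2^c(\mct^r)$ obtained from Theorem \ref{localthm} applied on each macro element to the zero-mean residue $q - \bar q$. Globally, $\Div \bv^* = q \in H^1(\Omega)$ because $W_h^R \subset H^1(\Omega)$, so $\bv^* \in \bH^1(\Div;\Omega)$. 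Locally on each $K \in \mct$, the function $\bu|_K$ belongs to $\mathrm{span}\{\bbeta_i\}_{i=1}^{d+1}$ by construction, and $\bw|_K \in \mathring{\bpol}_2^c(K^r)$ has divergence $(q - \bar q)|_K$, which differs from the continuous linear polynomial $q|_K$ by only a constant and therefore lies in $H^1(K)$. Consequently $\bw|_K \in \bpol_2(K^r)\cap \bH^1(\Div;K)$ and $\bv^*|_K \in \bV_R(K)$, so indeed $\bv^* \in \bV_h^R$.

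Testing $\bv^*$ in the inf-sup quotient then yields $\int_\Omega q\,\Div \bv^* / \|\bv^*\|_{H^1(\Omega)} = \|q\|_{L^2(\Omega)}^2 / \|\bv^*\|_{H^1(\Omega)} \ge (1/C)\|q\|_{L^2(\Omega)}$, which is the desired inf-sup bound with constant $\gamma = 1/C$. The main obstacle—and the only nontrivial point of the argument—is the verification in the third paragraph: the lifting furnished by the enlarged framework $\tilde{\bV}_h$ must respect both the global $\bH^1(\Div;\Omega)$ regularity and the local structural constraint $\bv^*|_K \in \bV_R(K)$. Both requirements hinge on the continuity of the target pressure $q \in W_h^R$: continuity across macro-element interfaces supplies the global $H^1$ regularity of the divergence, while continuity (in fact, linearity) of $q$ inside each $K$ ensures that the piecewise-quadratic correction $\bw|_K$ has $H^1$ divergence and thus belongs to $\bpol_2(K^r)\cap \bH^1(\Div;K)$.
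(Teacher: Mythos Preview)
Your argument is correct and takes a genuinely different route from the paper. The paper builds a Fortin-type interpolant directly from the degrees of freedom in Theorem \ref{thm:VRKrDOFsSmooth}: it lifts $q$ to some $\bw\in\bH^1_0(\Omega)$, sets the vertex and edge DOFs from the Scott--Zhang interpolant of $\bw$, the divergence DOFs from $q(x_i)$, and the face DOFs from $\int_F\bw\cdot\bn$; then it identifies $\Div\bv$ with $q$ using that both lie in $\pol_1^c(K^r)$ and agree at vertices and in mean. Your proof instead recycles the local surjectivity Theorem \ref{localthm} and the modified-bubble surjectivity Theorem \ref{modifiedbubble} to write down the lifting $\bv^*=\bw+\bu$ explicitly, and then observes that continuity of $q$ on each $K$ forces $\bw|_K\in\bpol_2(K^r)\cap\bH^1(\Div;K)$, so $\bv^*|_K\in\bV_R(K)$.

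What each buys: the paper's approach exercises the unisolvent DOFs and shows they behave like a Fortin operator (useful for implementation and for the reduced spaces in the following subsection), but it imports the Scott--Zhang interpolant. Your approach stays entirely inside the machinery already built in Sections \ref{sectionlocal}--\ref{sectionBernardiRaugel}, avoids any external interpolant, and makes transparent why the $\bH^1(\Div)$ constraint is automatically met once the target pressure is continuous. Two small remarks: the sentence invoking a ``standard equivalence between inf-sup stability and the existence of a bounded right inverse'' is only valid because $\Div\tilde\bV_h\subset\mathring{\pol}_1(\mct^r)$ (i.e., the pair is divergence-free), which you should state; and since your third paragraph constructs $\bv^*$ explicitly anyway, the appeal to Proposition \ref{lem:LowerOrderFramework} in the second paragraph is logically redundant and could be dropped.
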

\begin{proof}
Let $q\in W_h^R$
and let $\bw\in \bH^1_0(\Omega)$ satisfy $\Div \bw = q$ and $\|\bw\|_{H^1(\Omega)}\le C\|q\|_{L^2(\Omega)}$.
We then define $\bv \in \bV_h^R$ such that
\begin{alignat*}{2}
&\bv(x) = \bI^{SZ}_h \bw(x),\ \Div \bv(x) = q(x),\qquad &&\text{for all vertices } x,\\
&\int_{e} \bv = \int \bI^{SZ}_h \bw\qquad &&\text{for all one-dimensional edges $e$},\\
&\int_{F} \bv\cdot \bn = \int_F \bw\cdot \bn\qquad &&\text{for all $(d-1)$-dimensional faces } F,
\end{alignat*}
where $\bI^{SZ}_h \bw$ is the Scott-Zhang interpolant of $\bw$ \cite{ScottZhang90}.  We then have
$\Div \bv(x) = q(x)$ and
\begin{align*}
\int_K \Div \bv = \int_K \Div \bw = \int_K q.
\end{align*}
Since $q,\Div \bv|_K\in \pol_1^c(K^r)$, we conclude that $\Div \bv = q$.
Uniform inf-sup stability then comes from a standard scaling argument.
\end{proof}

\subsubsection{ Reduced velocity space of  $\bV_h^R$}
In this section, we give a basis for the local space
$\bpol_2(K^r)\cap \bH^1(\Div;K)$, and as a byproduct,
construct reduced spaces of $\bV_h^R$.
To this end, recall that, for a simplex $K\in \mct$, $\lambda_i\in \pol_1^c(K^r)$
satisfy $\lambda_i(x_j) = \delta_{i,j}$.
For each $i\in \{1,\ldots,d+1\}$ we  set 
\begin{equation*}
\bpsi_i= \bc_i \lambda_i^2,
\end{equation*}
where the constant $\bc_i \in \R^d$ is chosen so that 
\begin{equation*}
2 \bc_i \cdot \nab \lambda_i|_{K_j}=1 \text{ for all } 1 \le j \le  d+1, j \neq i.
\end{equation*}
 This is possible since  $\nab \lambda_i|_{K_j}$ for $1 \le j \le  d+1, j \neq i$ are linearly independent. We then see 
 that $\dive \bpsi_i= \lambda_i $, and so $\bpsi_i \in \bpol_2(K^r)\cap \bH^1(\Div;K)$.  
 We note from the proof of Theorem \ref{thm:VRKrDOFsSmooth} that
\begin{equation*}
\dim \bpol_2(K^r)\cap \bH^1(\Div;K)=\dim \bpol_2(K)+ d+1.
\end{equation*}
From this dimension count, we conclude that
\begin{equation*}
\bpol_2(K^r)\cap \bH^1(\Div;K)=\bpol_2(K)+ \text{span} \{ \bpsi_1, \bpsi_2, \ldots, \bpsi_{d+1} \}.
\end{equation*}

Next, using this construction, we reduce the dimension
of $\bV_h^R$ while still getting an inf-sup stable pair.
Recall from Section \ref{sectionBernardiRaugel} 
that $\{\mu_i\}_{i=1}^{d+1}$ are the barycentric coordinates of $K$. 
By the labeling {convention}, we then see that $\mu_i = \lambda_i$ on $\p K$
for all $1\le i\le d+1$. 
We then define
\begin{equation*}
\btheta_i= \frac{1}{2}\bc_i (\lambda_i^2-\mu_i^2),
\end{equation*}
and choose $\bc_i$ so that 
\begin{equation*}
\bc_i \cdot \nab (\lambda_i-\mu_i)|_{K_j}=1 \text{ for all } 1 \le j \le  d+1, j \neq i.
\end{equation*}
We then have
\begin{equation*}
\dive \btheta_i= (\lambda_i-\mu_i)\bc_i \cdot \nab \mu_i+ \lambda_i.
\end{equation*}
In particular, there holds $\dive \btheta_i|_{\p K} = \lambda_i$, 
and since $(\bc_i \cdot \nab \mu_i)$ is constant on $K$, 
$\dive \btheta_i$ is continuous.
Thus, these functions have the following  properties: 
\begin{alignat*}{1}
\btheta_i \in & \bH^1(\Div;K)\cap \bH^1_0(K), \\
\dive \btheta_i(x_j)= &\delta_{ij}.
\end{alignat*}

We  then define the space $\bV^S(K)=\text{span} \{\btheta_1, \btheta_2, \ldots, \btheta_{d+1} \}$. We see that the 
space is a space of bubbles (i.e. vanish on $\partial K$), and that
the degrees of freedom or $\bV^S(K)+ \bV^{\text{MF}}(K)$ are given by 
\begin{alignat*}{1}
&\int_{F_i} \bv \cdot \bn \quad  \text{ for all $(d-1)$-dimensional  faces  $F_i$  of   $K$},  \\
&\dive \bv (x_i) \quad \text{ for all vertices } x_i \text{ of } K.
\end{alignat*}

We can then define
\begin{equation*}
\bV_h^{\dive}=\{ \bv \in \bH^1(\dive;\Omega) \cap \bld{H}_0^1(\Omega): \bv|_K \in \bV^S(K)+ \bV^{\text{MF}}(K), \text{ for all } K \in \mct \}
\end{equation*}
and the degrees of freedom of this space are 
\begin{alignat*}{1}
&\int_{F} \bv \cdot \bn \quad  \text{ for all interior $(d-1)$-dimensional  faces  $F$ of  $\mct$},\\
&\dive \bv (x) \quad \text{ for all vertices $x$ of $\mct$}.
\end{alignat*}

These degrees of freedom give us the 
following result. Its proof is identical to 
the proof of Theorem \ref{thm:VhRWhRStability} and is therefore omitted.

\begin{lemma}\label{lem:divbubble}
It holds, $\dive \bV_h^{\dive} \subset W_h^R$. Moreover, for any $p \in W_h^F$ then there exists a  ${\bv \in} V_h^{\dive}$ so that 
\begin{equation}\label{vpAtl}
\dive \bv=p \qquad \text{ on } \Omega, 
\end{equation}
with the bound
\begin{equation*}
\|\bv\|_{H^1(\Omega)} \le C \| p \|_{L^2(\Omega)}.  
\end{equation*}
\end{lemma}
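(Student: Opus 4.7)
The plan is to mirror the construction used in Theorem \ref{thm:VhRWhRStability}, exploiting the structure of the degrees of freedom of $\bV_h^{\dive}$.

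The inclusion $\dive \bV_h^{\dive} \subset W_h^R$ is essentially by construction. On each $K\in \mct$ one has $\dive \btheta_i = (\lambda_i-\mu_i)\bc_i\cdot \nab \mu_i + \lambda_i \in \pol_1^c(K^r)$ and $\dive \bbeta_i\in \pol_0(K)\subset \pol_1(K^r)$. The required global continuity of $\dive \bv$ for $\bv\in \bV_h^{\dive}$ is built into the definition of $\bH^1(\dive;\Omega)$, and the zero-mean property follows from $\bv\in \bH^1_0(\Omega)$ together with the divergence theorem. For the surjectivity part, given $p\in W_h^R$ (reading $W_h^F$ as $W_h^R$), I would first invoke the classical Bogovskii result to produce $\bw\in \bH^1_0(\Omega)$ with $\dive \bw = p$ and $\|\bw\|_{H^1(\Omega)}\le C\|p\|_{L^2(\Omega)}$.

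Next, I would define $\bv \in \bV_h^{\dive}$ via the canonical degrees of freedom of the space:
\begin{align*}
\int_F \bv\cdot \bn = \int_F \bw\cdot \bn \quad \text{for every interior $(d-1)$-face $F$ of $\mct$},\qquad \dive \bv(x) = p(x)\quad \text{for every vertex $x$ of $\mct$}.
\end{align*}
To conclude $\dive \bv = p$, I would work on an arbitrary $K\in \mct$. By the first part, $r:=\dive \bv - p|_K \in \pol_1^c(K^r)$, and by construction $r(x_i)=0$ for every vertex $x_i$ of $K$. The divergence theorem gives
\begin{align*}
\int_K r = \int_{\p K}\bv\cdot \bn - \int_K p = \int_{\p K}\bw\cdot \bn - \int_K p = \int_K \dive \bw - \int_K p = 0,
\end{align*}
with boundary faces contributing nothing since both $\bv$ and $\bw$ vanish on $\p \Omega$.

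The decisive local observation, and the only place that needs care, is that any $r\in \pol_1^c(K^r)$ which vanishes at all vertices of $K$ restricts on each sub-simplex $K_i$ to a multiple of $\lambda_0$; continuity at the split point $x_0$ then forces $r = c\lambda_0$ on all of $K$, and the zero-mean condition $\int_K r=0$ yields $c=0$, hence $r\equiv 0$ and $\dive \bv = p$. Finally, the bound $\|\bv\|_{H^1(\Omega)}\le C\|p\|_{L^2(\Omega)}$ follows by a standard scaling argument on a reference element, combining the $H^1$ bound on $\bw$ with the boundedness of the canonical interpolant induced by the degrees of freedom of $\bV^S(K)+\bV^{\text{MF}}(K)$; this part is verbatim the scaling step at the end of the proof of Theorem \ref{thm:VhRWhRStability}.
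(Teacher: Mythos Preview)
Your proof is correct and follows exactly the route the paper intends: since the paper omits the proof and declares it ``identical to the proof of Theorem \ref{thm:VhRWhRStability},'' your construction via Bogovskii $\bw$, assignment of the face and vertex-divergence degrees of freedom, and the mean-plus-vertex argument on $\pol_1^c(K^r)$ is precisely that proof specialized to the reduced DOFs of $\bV_h^{\dive}$. You even make explicit the step the paper leaves tacit in Theorem \ref{thm:VhRWhRStability}, namely that $r\in \pol_1^c(K^r)$ with $r(x_i)=0$ for all vertices of $K$ forces $r=c\lambda_0$, whence $\int_K r=0$ gives $r\equiv 0$.
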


%

Of course, $\bV_h^{\dive}$ will not have good approximation properties; however,
we can supplement this space with $\mathring{\bpol}_1^c(\mct)$ locally
to obtain a convergent space. The next corollary is immediate.

\begin{corollary}\label{divbubble}
Let
\begin{equation}\label{VHspace}
\bV_h=\{ \bv \in \bld{H^1}(\dive;\Omega) \cap \bld{H}_0^1(\Omega): \bv|_K \in \bpol_1(K)+\bV^S(K)+ \bV^{\text{MF}}(K), \text{ for all } K \in \mct \}.
\end{equation}
Then the pair $\bV_h-W_h^R$ is a divergence-free and inf-sup stable pair.
\end{corollary}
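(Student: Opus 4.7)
My plan is to show both conditions essentially by reduction to earlier results. The divergence-free condition is a component-by-component check against the definition of $W_h^R$, and the inf-sup condition follows immediately from Lemma \ref{lem:divbubble} together with the inclusion $\bV_h^{\dive}\subset \bV_h$.

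For the divergence-free property, I would verify locally that on each $K\in \mct$,
\[
\dive\bigl(\bpol_1(K)+\bV^S(K)+\bV^{\text{MF}}(K)\bigr)\subset \pol_1^c(K^r)\cap H^1(K).
\]
The piece $\dive \bpol_1(K)\subset \pol_0(K)$ is trivially a subset. For $\bV^S(K)=\mathrm{span}\{\btheta_1,\ldots,\btheta_{d+1}\}$, the explicit formula $\dive \btheta_i=(\lambda_i-\mu_i)\bc_i\cdot \nab \mu_i+\lambda_i$ derived in the previous subsection shows that each $\dive \btheta_i$ is a continuous piecewise linear function on $K^r$. For $\bV^{\text{MF}}(K)$, Proposition \ref{prop:Bubbles} gives $\dive \bbeta_i\in \pol_0(K)$. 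Globally, since $\bv\in \bH^1(\dive;\Omega)$ the piecewise polynomial function $\dive\bv$ is automatically continuous across element interfaces, and $\bv\in \bH^1_0(\Omega)$ forces $\int_\Omega \dive\bv=0$; together these yield $\dive\bv\in \mathring{\pol}_1(\mct^r)\cap H^1(\Omega)=W_h^R$.

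For inf-sup stability, I would simply note that by construction $\bV_h^{\dive}\subset \bV_h$. Given $p\in W_h^R$, Lemma \ref{lem:divbubble} produces $\bv\in \bV_h^{\dive}\subset \bV_h$ with $\dive\bv=p$ and $\|\bv\|_{H^1(\Omega)}\le C\|p\|_{L^2(\Omega)}$. Hence
\[
\sup_{0\neq \bw\in \bV_h}\frac{\int_\Omega p\,\dive\bw}{\|\bw\|_{H^1(\Omega)}}\ge \frac{\int_\Omega p\,\dive\bv}{\|\bv\|_{H^1(\Omega)}}=\frac{\|p\|_{L^2(\Omega)}^2}{\|\bv\|_{H^1(\Omega)}}\ge C^{-1}\|p\|_{L^2(\Omega)},
\]
which is precisely the inf-sup condition of Definition \ref{def:InfSup}.

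Since both ingredients are already fully encapsulated by Lemma \ref{lem:divbubble} and the structural properties of $\bV^S(K),\bV^{\text{MF}}(K)$ established earlier, there is no real obstacle here; the only point requiring mild care is the global continuity of $\dive\bv$, which is built into the $\bH^1(\dive;\Omega)$ conformity imposed in the definition \eqref{VHspace} of $\bV_h$. Accordingly the corollary is indeed immediate, and I would present it as a one-line proof invoking Lemma \ref{lem:divbubble} and the inclusion $\bV_h^{\dive}\subset \bV_h$.
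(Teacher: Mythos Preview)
Your proposal is correct and matches the paper's approach: the paper simply states that the corollary is immediate from Lemma~\ref{lem:divbubble} and the construction of the spaces, without giving a separate proof. Your verification of the divergence-free property and the inf-sup condition via the inclusion $\bV_h^{\dive}\subset \bV_h$ is exactly the intended one-line argument spelled out in full.
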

In fact, the degrees of freedom of the space \eqref{VHspace} are
\begin{alignat*}{2}
&\bv(x),\ \dive \bv(x) \quad && \text{ for all interior vertices } x \text{ of } \mct,\\
&\int_{F} \bv \cdot \bn \quad  && \text{ for all interior $(d-1)$-dimensional faces $F$  of  $\mct$}.
\end{alignat*}

Finally, it is clear that  the above space is indeed a subspace  $\bV_h^R$. However, the velocity approximation will converge with one  order less.

\end{document}